\newcommand{\vt}{\vartheta}
\newcommand{\R}{\mathbb{R}}
\newcommand{\N}{\mathbb{N}}
\newcommand{\eps}{\varepsilon}
\newcommand{\var}{\varepsilon}
\newcommand{\na}{\nabla}
\newcommand{\pa}{\partial}
\newcommand{\X}{\mathfrak{X}}
\newcommand{\Y}{\mathfrak{Y}}
\newcommand{\Z}{\mathbf{Z}}
\newcommand{\F}{\mathcal{F}}
\newcommand{\mm}{\mathfrak{M}}
\newcommand{\cc}{\mathfrak{C}}
\renewcommand{\S}{\mathsf{B}}
\newcommand{\lam}{\lambda}
\newcommand{\td}{\tilde}
\newcommand{\bra}[1]{\left(#1\right)}
\newcommand{\sbra}[1]{\left[#1\right]}
\newcommand{\abra}[1]{\left\{#1\right\}}
\newcommand{\norm}[1]{\left\|#1\right\|}
\newcommand{\inner}[2]{\left\langle#1,#2\right\rangle}
\newcommand{\abs}[1]{\left|#1\right|}
\newtheorem{theorem}{Theorem}[section]
\newtheorem{lemma}{Lemma}[section]
\newtheorem{proposition}{Proposition}[section]
\newtheorem{remark}{Remark}[section]
\title[A chemotaxis model involved in the modeling of multiple sclerosis]{Global well-posedness and nonlinear stability of a chemotaxis system modeling multiple sclerosis}
\author{Laurent Desvillettes, Valeria Giunta, Jeff Morgan, Bao Quoc Tang}
\address{\small  Universit\'{e} de Paris, Sorbonne Universit\'e, CNRS\\
 Institut de Math\'{e}matiques de Jussieu-Paris Rive Gauche}
\email{desvillettes@math.univ-paris-diderot.fr}
\address{\small  Department of Engineering, University of Palermo, Italy}
\email{valeria.giunta@unipa.it}
\address{\small Department of Mathematics, University of Houston, Houston, Texas 77004, USA}
\email{jmorgan@math.uh.edu}
\address{\small Institute of Mathematics and Scientific Computing, University of Graz, Heinrichstrasse
	36, 8010 Graz, Austria}
\email{quoc.tang@uni-graz.at, baotangquoc@gmail.com}
\begin{document}
		\subjclass[2010]{35K51,35K57,35K92,92C17,35B40}
	\keywords{Chemotaxis models; Global solutions; Uniform-in-time bounds; Nonlinear stability; Cross diffusion}
	\begin{abstract}
		We consider a system of reaction--diffusion equations including chemotaxis terms and coming out of the modeling of multiple sclerosis. 
		The global existence of strong solutions to this system in any dimension is proved, and it is also shown that the solution is bounded uniformly in time. Finally, a nonlinear stability result is obtained when the chemotaxis term is not too big. We also perform numerical simulations to show the appearance of Turing patterns when the chemotaxis term is large.
	\end{abstract}
	\maketitle
	
	\section{Introduction and Main results}
	
	In this paper, we study the global existence of strong solutions (and also some aspects of  the large time behavior) of the following reaction-diffusion system including chemotaxis terms:
	\begin{equation}\label{sys_original}
	\left\{
	\begin{aligned}
	\partial_t m &= \Delta m + m(1-m^{a-1}) - \chi \na\cdot(f(m)\na c), &&(x,t)\in Q_T,\\
	\pa_t c &= \eps_0\Delta c + \delta d - c + \beta m, &&(x,t)\in Q_T,\\
	\pa_t d &= g(m)(1-d), &&(x,t)\in Q_T,\\
	\na m\cdot \nu &=\na c\cdot \nu = 0, &&(x,t)\in \pa\Omega\times(0,T),\\
	m(x,0) &= m_0(x), c(x,0) = c_0(x), d(x,0) = d_0(x), &&x\in\Omega ,
	\end{aligned}
	\right.
	\end{equation}
	where $\Omega\subset \mathbb R^n$ is a bounded domain with smooth boundary $\partial\Omega$, $Q_T := \Omega \times (0,T)$, $\nu$ is the outward normal vector to $\partial\Omega$ at a point of $\pa\Omega$, the parameters satisfy $a>1$, and $\chi, \eps_0, \delta, \beta > 0$, and the initial data is nonnegative, bounded and sufficiently smooth. The nonlinearities $f$ and $g$ satisfy some conditions which will be specified later.

	\medskip	
	The system \eqref{sys_original} with 
	\begin{equation}\label{special}
		a = 2, \quad 
		f(m) = \frac{m}{1+m} \quad \text{ and } \quad g(m) = \frac{rm^2}{1+m}, \; r>0,
	\end{equation}
	was recently proposed \cite{lombardo2017demyelination} for the dynamics of multiple sclerosis. There, $m := m(x,t) \ge 0$ is the density of inflammatory immune cells (macrophages), $c := c(x,t) \ge 0$ is the density of a chemoattractant (cytokine), and $d := d(x,t) \ge 0$ is the density of destroyed oligodendrocytes. This model generalizes that of \cite{calvez2008mathematical} and \cite{khonsari2007origins} to describe a rare and aggressive form of multiple sclerosis, namely Bal\'o's sclerosis. It has been shown that by varying the parameter values within realistic ranges taken from the experimental literature, this model is
	able to reproduce different pathological scenarios typical of the disease (\cite{BGGLPS18}, \cite{BGGLPS19}). The analysis of \eqref{sys_original}--\eqref{special} has been carried out in two recent works: in \cite{desvillettes2020existence}, the global existence of very weak and classical solutions to \eqref{sys_original}--\eqref{special} was shown in dimension one; global classical solutions in higher dimensions are shown in \cite{hu2020global} under the smallness assumption of the chemoattractant coefficient $\chi$ (when $n>3$). This latter work also shows that the solution is bounded uniformly in time, and the positive equilibrium is globally asymptotically stable if $\chi$ is small enough.

	\medskip

	In the present work, we study \eqref{sys_original} with a general logistic growth of order $a>1$, and the nonlinearities $f$ and $g$ are assumed to satisfy
	\begin{enumerate}[label=(F),ref=F]
		\item\label{F} $f\in C^2([0,\infty))$ with $f(0) = 0$, and there exist constants $\gamma, b, \ell \geq 0$ such that
		\begin{equation}\label{assumption_f}
		|f(y)| \leq \gamma \, y^b \quad \text{ and } \quad |f'(y)|\leq \gamma\,  y^\ell \quad \text{ for all } \quad y\geq 0 ,
		\end{equation}
	\end{enumerate}
	\begin{enumerate}[label=(G),ref=G]
		\item\label{G} $g\in C^1([0,\infty))$ and $g(y) > 0$ for all $y > 0$.
	\end{enumerate}
	It's easy to check that the special case \eqref{special} fulfills the assumptions \eqref{F} for $f$ with $b = \ell =0$, and the assumption \eqref{G} for $g$.
	
	\medskip
	The first main result of this paper concerns the existence and uniqueness of (uniformly w.r.t. $t$) bounded strong solutions to system (\ref{sys_original}), when the parameter $b$ appearing in assumption 
    (\ref{assumption_f}) is not too large.

	\begin{theorem}\label{thm:main}
Let $\Omega$ be a smooth ($C^{2+\alpha}$ for some $\alpha>0$) bounded connected open subset of $\R^n$ (for $n \in \N - \{0\}$), and suppose $a>1$, $\chi, \eps_0, \delta, \beta > 0$, $\gamma, b, l \ge 0$, and the functions $f$ and $g$ satisfy (F) and (G).
We further assume 
		\begin{equation}\label{e2}
		\max\abra{b-1,\frac b2} < \frac{a}{n+2},
		\end{equation}
	and consider  nonnegative initial data $(m_0, c_0, d_0)\in W^{(2-2/\tilde{p}),\tilde p}(\Omega)\times W^{(2-2/\tilde{p}),\tilde p}(\Omega)\times L^\infty(\Omega)$, for some $\tilde{p}>\max\{n+2,a\}$.
Then,  system \eqref{sys_original} has a unique nonnegative (for each component) strong solution which is bounded uniformly in time. More precisely, $\pa_t m$, $\pa_t c$, $\pa_t d$, $\Delta m$, $\Delta c$, $ \na\cdot(f(m)\na c)$ belong to $L^{\tilde p}(\Omega\times(0,T))$ for all $T>0$, and there exists $C>0$ (depending on $\Omega$, $n$, $a$, $b$, $\var_0$, $\beta$, $\delta$, $m_0$, $c_0$, $d_0$), and $\tilde C>0$ (depending on $\Omega$, $n$, $a$, $b$, $\var_0$, $\beta$, $\delta$, $d_0$), such that
		\begin{equation}\label{uniform-in-time}
			\sup_{t\geq 0}\bra{\|m(\cdot , t)\|_{L^\infty(\Omega)}+\|c(\cdot , t)\|_{W^{1,\infty}(\Omega)}+\|d(\cdot , t)\|_{L^\infty(\Omega)}} \leq C,
		\end{equation}
		and
         \begin{equation}\label{eventual-uniform-in-time}
			\limsup_{t\to \infty}\bra{\|m(\cdot , t)\|_{L^{\infty}(\Omega)}+\|c(\cdot , t)\|_{W^{1,\infty}(\Omega)}+\|d(\cdot , t)\|_{L^\infty(\Omega)}} \leq \tilde C.
		\end{equation}
		\medskip
		Finally, if the initial data are smooth, i.e. $m_0, c_0, d_0 \in C^2(\overline{\Omega})$ and satisfy compatibility conditions $\na m_0 \cdot \nu = \na c_0 \cdot \nu = 0$ on $\pa\Omega$, then the solution is classical. That is $$\pa_tm, \pa_tc, \pa_td, \Delta m, \Delta c, \na\cdot(f(m)\na c) \in C^0(\overline{\Omega}\times [0,T])$$ for all $T>0$.
	\end{theorem}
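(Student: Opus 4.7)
The plan is the standard route for chemotaxis systems with logistic damping: establish a local strong solution by a contraction argument, derive a priori $L^p$-bounds on $m$ that exploit the dissipation $-p\,m^{p+a-1}$ to beat the chemotactic gradient, bootstrap to $L^\infty$, and finally upgrade regularity. The starting point is a short-time existence result based on maximal $L^{\td p}$-parabolic regularity: one freezes the coupling and sets up a contraction in $C([0,\tau];W^{2-2/\td p,\td p})$ for $m$ and $c$, while $d$ is solved explicitly from its ODE. Nonnegativity is immediate in each component: from the ODE, $d(x,t)=1-(1-d_0(x))\exp(-\int_0^t g(m)\,ds)\in[0,1]$; the maximum principle gives $c\geq 0$ since the forcing $\delta d+\beta m$ is nonnegative; and since $f(0)=0$, a Stampacchia truncation in the $m$-equation yields $m\geq 0$.

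The core of the proof is the a priori $L^p$-estimate on $m$. Testing the first equation of \eqref{sys_original} against $p\,m^{p-1}$ yields
\begin{equation*}
\frac{d}{dt}\int_\Omega m^p + \frac{4(p-1)}{p}\int_\Omega |\na m^{p/2}|^2 + p\int_\Omega m^{p+a-1} = p\int_\Omega m^p + \chi p(p-1)\int_\Omega m^{p-2}f(m)\,\na m\cdot\na c,
\end{equation*}
and the chemotactic term has to be dominated by the gradient dissipation together with the logistic sink $-p\,m^{p+a-1}$. I would handle it in two complementary ways according to the size of $b$: either (i) integrate by parts to rewrite it as $-\int F(m)\,\Delta c$ with $F'(m)=m^{p-2}f(m)$ (so $|F(m)|\lesssim m^{p+b-1}$), then substitute $\Delta c=\eps_0^{-1}(\pa_t c-\delta d+c-\beta m)$ from the $c$-equation and absorb $\pa_t c$ through a time-integrated auxiliary functional; or (ii) apply Young's inequality directly using $|f(m)|\leq \gamma m^b$, producing a term of order $\int m^{p+2b-1}|\na c|^2$ that is controlled after a Gagliardo--Nirenberg interpolation against the gradient term and the logistic dissipation. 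The two halves of \eqref{e2} are tailored to these strategies: $b/2<a/(n+2)$ closes method (i) while $b-1<a/(n+2)$ closes method (ii). Coupled with $L^{\td p}$-parabolic estimates for $c$ (turning $L^{q}$-control of $m$ into $W^{1,r}$-control of $c$), this produces a Bernoulli-type inequality $y'(t)+C_1\,y(t)^{1+\theta}\leq C_2$ for $y(t)=\int_\Omega m^p$, which delivers both the global $L^p$-bound and its uniform-in-time version. Crucially, the asymptotic value of $y$ is controlled only by $\|d_0\|_{L^\infty}$ and the structural parameters, since $m$ eventually feels only the dissipation and the $d$-driven source through $c$; this is what yields the $\td C$-bound in \eqref{eventual-uniform-in-time}. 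An Alikakos--Moser iteration $p\to\infty$ then lifts the estimate to $L^\infty$ for $m$.

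Once $m\in L^\infty$ uniformly, parabolic maximal regularity for the $c$-equation upgrades $c$ to $L^\infty_t W^{1,\infty}_x$, $d\in L^\infty$ is trivial, and the standard blow-up alternative gives global-in-time existence with the bound \eqref{uniform-in-time}. For the classical regularity statement under smooth and compatible initial data, one reads \eqref{sys_original} as a linear parabolic system with coefficients now known to be Hölder continuous in $(x,t)$, and bootstraps via Schauder estimates, the compatibility condition $\na m_0\cdot\nu=\na c_0\cdot\nu=0$ guaranteeing continuity of $\pa_t$ and $\Delta$ up to $t=0$. The main obstacle is unquestionably the a priori $L^p$-estimate under the sharp condition \eqref{e2}: the chemotactic term is critical in the sense that wasting any positive power of either the gradient dissipation or the logistic sink spoils the inequality, and the whole argument hinges on choosing the right combination of rewriting (i) vs.\ (ii) together with the exact Gagliardo--Nirenberg exponents determined by $n,a,b$.
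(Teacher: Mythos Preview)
Your route is genuinely different from the paper's. The paper never tests the $m$-equation against $m^{p-1}$; instead it runs a \emph{duality argument}. From the logistic term one has $m\in L^a(Q_T)$, and maximal regularity for the linear $c$-equation gives $\nabla c\in L^{q_1}(Q_T)$ with $q_1=(n+2)a/(n+2-a)$. One then pairs the $m$-equation with the solution $\phi$ of a backward heat equation $\partial_t\phi+\Delta\phi=-\theta$, $\phi(T)=0$, for an arbitrary $0\le\theta$ with $\|\theta\|_{L^{p'}}\le 1$. The pairing $\int m\theta$ splits into $(I)=\int\phi m$ and $(II)=\chi\int f(m)\nabla\phi\cdot\nabla c$, and H\"older/interpolation against the known $L^a$ and $L^{q_1}$ bounds yields $\|m\|_{L^p}\le C(1+\|m\|_{L^p}^\vartheta+\|m\|_{L^p}^s)$ with $\vartheta,s\in(0,1)$, which closes by Young. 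The two halves of \eqref{e2} are used \emph{simultaneously} in bounding $(II)$ --- they permit the choice of an auxiliary exponent $s\in(0,1)$ with $b-s<a/(n+2)$ and $b-s<2a/(n+2)-1$ --- rather than being split between two alternative strategies. Existence itself is obtained by Leray--Schauder on a truncated system, and the uniform-in-time bounds come from repeating the duality estimate on shifted cylinders $Q_{\tau,\tau+2}$ with a time cutoff $\psi_\tau$, not from a Bernoulli-type ODE for $\int m^p$.

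Your energy/Moser scheme is plausible in spirit, but the sketch leaves a genuine gap: you have not shown that either rewriting (i) or (ii) actually closes under the precise hypothesis \eqref{e2}. In (i), substituting $\Delta c=\eps_0^{-1}(\partial_t c-\delta d+c-\beta m)$ produces $\int F(m)\,\partial_t c$, and ``absorb $\partial_t c$ through a time-integrated auxiliary functional'' is exactly the step that needs work; in the arguments of this type I know, the resulting threshold on $a,b,n$ is typically \emph{stronger} than \eqref{e2}. In (ii), after Young you face $\int m^{p+2b-2}|\nabla c|^2$, and whether Gagliardo--Nirenberg plus the logistic sink close under $b-1<a/(n+2)$ alone is something to be verified with explicit exponents, not asserted. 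The virtue of the duality method is that it never spends any of the gradient dissipation of $m$: it uses only the soft $L^a$ information on $m$ and the $W^{2,1}_a$ regularity of $c$, which is why it lands exactly on \eqref{e2}. Two minor corrections: your formula gives $d\in[0,1]$ only when $d_0\le 1$ --- in general $0\le d\le\max\{1,\|d_0\|_{L^\infty}\}$; and you have not addressed uniqueness, which the paper handles by a separate $L^2$ Gronwall estimate on the difference of two solutions.
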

	\begin{remark}
		The difference between $C$ and $\tilde{C}$ in \eqref{uniform-in-time} and \eqref{eventual-uniform-in-time} is that $\tilde{C}$ does not depend on $m_0$ and $c_0$. This means that in large time, the bound of solutions depends only on the size of $d_0$ but not $m_0$ and $c_0$.
	\end{remark}
	For the special case \eqref{special}, since $b = 0$ and $a = 2$, condition \eqref{e2} is satisfied in all dimensions. Therefore, Theorem \ref{thm:main} in particular shows that the system \eqref{sys_original}--\eqref{special} has global unique strong solutions (for suitable initial data) in all dimensions. We also emphasize that Theorem \ref{thm:main} improves the global existence in \cite{hu2020global} by removing the requirement that $\chi$ is small in all cases.
	
	
	Let us briefly describe the ideas underlying the proof  of Theorem \ref{thm:main}. Using the comparison principle, it's easy to show that $d$ is bounded uniformly in time and space. It follows straightforwardly from the logistic term that $m$ is bounded in $L^a(\Omega\times(0,T))$. Using these bounds and the properties of the heat equation satisfied by $c$, thanks to its linearity, we obtain some $L^q(\Omega\times(0,T))$ bound for $\na c$. In order to use this estimate in the equation satisfied by $m$, we exploit the so-called {\it duality method}, which proved its significant usefulness in the study of reaction-diffusion systems (cf. \cite{canizo2014improved,morgan2020boundedness,pierre10}), or cross diffusion systems (\cite{DTres, DLMT}). Under condition \eqref{e2}, the duality method yields an $L^p(\Omega\times(0,T))$ estimate for $m$ for all $1\leq p < \infty$. This information and a bootstrap argument allow us to conclude that $c$ is bounded in $L^\infty(0,T;W^{1,\infty}(\Omega))$ and that $m$ is bounded in $L^\infty(\Omega\times(0,T))$, hence the global existence of bounded solutions to \eqref{sys_original}. To show the uniform-in-time bound \eqref{uniform-in-time}, we use a smooth truncation function in time to study \eqref{sys_original} on each cylinder $\Omega\times(\tau,\tau+1)$, $\tau\in \mathbb N$. We repeat the previous arguments to obtain that $m$ and $c$ are bounded in $\Omega\times(\tau,\tau+1)$ {\it uniformly in $\tau\in \mathbb N$}, which consequently implies the desired bounds \eqref{uniform-in-time} and \eqref{eventual-uniform-in-time}. It's worth 
noting that this uniform-in-time bound plays an important role in the nonlinear stability, which is discussed in the following paragraphs.

	\medskip
	Our second main result of this paper is the nonlinear exponential stability of a positive (for all components) homogeneous equilibrium. A homogeneous equilibrium $(m^*,c^*,d^*)$ to \eqref{sys_original} solves $m^* = (m^*)^a$, $\delta d^* + \beta m^* = c^*$ and $g(m^*)(1-d^*) = 0$. It's straightforward that $$(\bar m, \bar c, \bar d) = (1, \beta +\delta,1)$$ is the unique {\it positive} (for all components) homogeneous equilibrium. Besides this, we also have other equilibria, namely:
	\begin{equation*}
		(m^*,c^*,d^*) = \begin{cases}
			(0,\delta,1) &\text{ if } g(0)\ne 0,\\
			(0,\delta\zeta,\zeta) \text{ with } \zeta \geq 0 \text{ arbitrary }  &\text{ if } g(0) = 0.
		\end{cases}
	\end{equation*}
	
	\begin{theorem}
		[Exponential nonlinear stability of $(\bar{m}, \bar{c}, \bar{d})$]\label{thm:main2}
		Let the assumptions of Theorem~\ref{thm:main} hold and assume additionally that $f(1)>0$. If
		\begin{equation}\label{sub_critical}
			\chi < \chi_{\mathrm{subcrit}} := \frac{4\sqrt{\eps_0(a-1)}}{\beta f(1)},
		\end{equation}
		then the constant steady state $(\bar{m}, \bar{c}, \bar{d})$ is exponentially asymptotically stable. More precisely, there exists $\eps>0$ such that if $(m_0,c_0,d_0)\in W^{(2-2/\tilde{p}),\tilde{p}}(\Omega)\times W^{(2-2/\tilde{p}),\tilde{p}}(\Omega)\times L^\infty(\Omega)$ for $\tilde{p}>\max(n+2,a)$, and
		\begin{equation*}
			\|m_0 - \bar{m}\|_{L^2(\Omega)} + \|c_0 - \bar{c}\|_{L^2(\Omega)} + \|d_0 - \bar{d}\|_{L^2(\Omega)} \leq \eps,
		\end{equation*}
		then there are $C, \varrho > 0$ such that
		\begin{equation*}
			\norm{m(t) - \bar{m}}_{L^\infty(\Omega)} + \norm{c(t) - \bar{c}}_{W^{1,\infty}(\Omega)} + \norm{d(t) - \bar{d}}_{L^\infty(\Omega)} \leq C \,e^{-\varrho  t} \quad \text{ for all } t\geq 0.
		\end{equation*}
	\end{theorem}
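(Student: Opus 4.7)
The plan is to pass to perturbations $M := m - 1$, $C := c - (\beta+\delta)$, $D := d - 1$, build an $L^2$ Lyapunov functional that decays exponentially under the sub-criticality condition \eqref{sub_critical}, and upgrade this decay to the norms in the theorem via parabolic smoothing. Theorem~\ref{thm:main} supplies the starting ingredients: the solution exists globally and $(m,c,d)$ is uniformly bounded in $L^\infty \times W^{1,\infty} \times L^\infty$; these bounds will be used to dominate the nonlinear remainder in the energy estimate.

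Concretely, consider the weighted quadratic functional
\[
E(t) := \tfrac{1}{2}\norm{M(t)}_{L^2(\Omega)}^2 + \tfrac{\kappa}{2}\norm{C(t)}_{L^2(\Omega)}^2 + \tfrac{\mu}{2}\norm{D(t)}_{L^2(\Omega)}^2,
\]
with constants $\kappa, \mu > 0$ to be tuned. Differentiating along the perturbation system and integrating by parts produces the linear dissipation $-\norm{\na M}^2 - (a-1)\norm{M}^2 - \kappa\eps_0\norm{\na C}^2 - \kappa\norm{C}^2 - \mu g(1)\norm{D}^2$, together with the cross terms $\chi f(1)\int \na M\cdot \na C$ (from chemotaxis), $\kappa\beta\int MC$ and $\kappa\delta\int CD$ (from the linear reaction couplings). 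By Young's inequality, absorbing the chemotactic cross term into the gradient dissipation forces $\kappa \geq (\chi f(1))^2/(4\eps_0)$, while absorbing $\kappa\beta\int MC$ into $-(a-1)\norm{M}^2 - \kappa\norm{C}^2$ forces $\kappa \leq 4(a-1)/\beta^2$. These two constraints are compatible precisely when $\chi\beta f(1) < 4\sqrt{\eps_0(a-1)}$, i.e.\ under \eqref{sub_critical}; the remaining cross term $\kappa\delta\int CD$ is then absorbed into the $D$-dissipation by choosing $\mu$ large enough. This yields a linear estimate $\dot E_{\mathrm{lin}} \leq -\lam_0 E$ for some $\lam_0>0$.

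The nonlinear remainder comes from the Taylor expansions of $m(1-m^{a-1})$, $f(m)$ and $g(m)$ at $m=1$ and is bounded by $\norm{M}_{L^\infty}+\norm{\na C}_{L^\infty}$ times a quadratic expression in $(M,\na M,\na C,D)$. Theorem~\ref{thm:main} provides uniform-in-time control of these $L^\infty$ norms but not smallness, so a continuation argument is needed. Fix a small threshold $\eps_1>0$ and set $T^* := \sup\{t\geq 0 : \norm{M(t)}_{L^\infty}+\norm{C(t)}_{W^{1,\infty}}+\norm{D(t)}_{L^\infty}\leq \eps_1\}$; for $t<T^*$ the nonlinear error is dominated by $\tfrac{\lam_0}{2}E$, so $\dot E \leq -\tfrac{\lam_0}{2}E$ and $E(t)\leq E(0)e^{-\lam_0 t/2}$. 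Interpolating this decaying $L^2$ control against the (non-small) uniform higher-regularity bounds of Theorem~\ref{thm:main} forces $\norm{M(t)}_{L^\infty}$ etc.\ below $\eps_1$ for all $t$ provided $E(0)\lesssim \eps^2$ is small enough, hence $T^*=\infty$. Finally, parabolic smoothing for the $m$ and $c$ equations, combined with the pointwise formula $D(x,t) = D_0(x)\exp\bra{-\int_0^t g(m(x,s))\,ds}$ (together with $g(m)\geq g(1)/2$ once $m$ is close to $1$), upgrades the exponential $L^2$ decay to the $L^\infty$ and $W^{1,\infty}$ decay stated in the theorem, with rate $\varrho$ slightly less than $\lam_0/2$.

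The main obstacle is the simultaneous balancing in the linear part of $\dot E$: the chemotactic cross term $\chi f(1)\int\na M\cdot\na C$ and the reactive coupling $\kappa\beta\int MC$ must both be dominated by the available dissipation, and the feasibility window for $\kappa$ is nonempty exactly when \eqref{sub_critical} holds, making the bound $4\sqrt{\eps_0(a-1)}/(\beta f(1))$ sharp at the linear level (beyond it one expects the Turing patterns observed numerically). A secondary subtlety is that smallness of the initial data is assumed only in $L^2$, so the continuation step must rely on interpolating this $L^2$-smallness against the larger but uniform bounds from Theorem~\ref{thm:main} to propagate smallness in the stronger norms required to close the energy estimate.
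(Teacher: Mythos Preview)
Your linear analysis matches the paper's almost exactly: the paper also uses the weighted functional $\varphi(t)=\tfrac12\bigl(\|\tilde m\|_{L^2}^2+\vartheta_1\|\tilde c\|_{L^2}^2+\vartheta_2\|\tilde d\|_{L^2}^2\bigr)$ and arrives at the same feasibility window for the weight $\vartheta_1$ (your $\kappa$) by balancing the chemotactic cross term against the gradient dissipation and the reactive coupling $\beta\vartheta_1\langle\tilde m,\tilde c\rangle$ against the zero-order dissipation; condition \eqref{sub_critical} falls out identically, and $\vartheta_2$ (your $\mu$) is then chosen large to absorb the $\delta$-coupling. So on the linear side the two proofs coincide.

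Where you diverge from the paper is in controlling the nonlinear remainder, and here your continuation argument has a genuine gap. You bound the remainder by $\bigl(\|M\|_{L^\infty}+\|\nabla C\|_{L^\infty}\bigr)$ times a quadratic form and then set $T^*=\sup\{t:\|M\|_{L^\infty}+\|C\|_{W^{1,\infty}}+\|D\|_{L^\infty}\le\eps_1\}$. The problem is that the theorem assumes only $L^2$-smallness of the initial perturbation, so nothing prevents $\|M_0\|_{L^\infty}$ or $\|D_0\|_{L^\infty}$ from exceeding $\eps_1$, in which case $T^*=0$ and the argument never starts. Your proposed remedy, interpolating the decaying $L^2$ norm against the uniform bounds of Theorem~\ref{thm:main}, does not close this: for $m$ and $d$ Theorem~\ref{thm:main} supplies only $L^\infty$ control, and interpolating $L^2$ against $L^\infty$ yields smallness in $L^p$ for finite $p$, not in $L^\infty$. (For $c$ the $W^{1,\infty}$ bound would suffice, but that is not the obstruction.) You note this ``secondary subtlety'' in your last paragraph, but the fix you sketch does not actually resolve it.

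The paper sidesteps the whole issue: instead of assuming $L^\infty$-smallness along the flow, it keeps the (large) uniform bounds $K_m,K_c,K_d$ from Theorem~\ref{thm:main} as fixed constants and uses the interpolation
\[
\|\tilde m\|_{L^{2+\kappa}(\Omega)}^{2+\kappa}\le C\,\|\tilde m\|_{H^1(\Omega)}^2\,\|\tilde m\|_{L^2(\Omega)}^{\kappa}
\]
(for a small $\kappa>0$ with $H^1\hookrightarrow L^{2+2\kappa}$) to estimate each nonlinear term by $C\,\|\tilde m\|_{H^1}^2\,\varphi(t)^{\kappa/2}$ plus harmless pieces. This produces the closed inequality
\[
\varphi'(t)\le -\tfrac{\omega_5}{2}\varphi(t)+\bigl(\|\tilde m\|_{H^1}^2+\|\tilde c\|_{H^1}^2\bigr)\bigl[C\varphi(t)^{\kappa/2}-\tfrac{\omega_4}{2}\bigr],
\]
so if $\varphi(0)$ is small then $\varphi$ decreases and stays small, with no need for an $L^\infty$ bootstrap. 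The upgrade to $L^\infty$ decay is then a final cosmetic step (interpolation against the uniform higher regularity available from the $W^{2,1}_{\tilde p}$ estimates in Section~\ref{sec:uniform}), rather than something needed to close the energy argument. Replacing your $L^\infty$-smallness bound on the remainder by this $L^{2+\kappa}$ interpolation is the missing idea.
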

	\begin{remark}
		We expect that the nonlinear stability in Theorem \ref{thm:main2} can be shown under the optimal condition $\chi < \chi_c$, where $\chi_c$ is defined in \eqref{chi_c} using the eigenvalues of the Laplacian with Neumann boundary condition (see section \ref{sec:stability} for more details). 
 We remark that the threshold \eqref{sub_critical} becomes optimal when $\eps_0^{-1} = a-1$, which makes $\chi_{\mathrm{subcrit}} = \chi_{c0}$ (see \eqref{chi_c0}).
	\end{remark}
	To prove Theorem \ref{thm:main2}, we first rewrite system \eqref{sys_original} with a new variable $(\tilde{m}, \tilde{c}, \tilde{d}) = (m - \bar m, c - \bar c, d - \bar d)$. Next, we show that under condition \eqref{sub_critical}, the linear part, after a suitable scaling, has a spectral gap. This spectral gap and the uniform-in-time bound proved in Theorem \ref{thm:main} allow us to show that the nonlinear part is dominated by the linear one, and consequently obtain the nonlinear stability of $(\bar m, \bar c, \bar d)$.
	
	
	\medskip
	{\bf The organization of the paper is as follows}: In  section \ref{sec2}, we prove Theorem~\ref{thm:main} by first showing global existence, 
 then uniqueness, and finally the uniform-in-time bound. In subsection \ref{local_stabil}, we prove the nonlinear stability in Theorem \ref{thm:main2}. Then, in subsection \ref{subsec:Turing} we perform numerical simulations on a 2D spatial domain to show Turing patterns which appear for sufficiently large chemotractant coefficient $\chi$. Finally, subsection \ref{subsec:GlobalStab} concludes the paper with a presentation of open problems relevant for the 
model under study.
	
	\medskip
	{\bf Notation.} Throughout this paper, we use the following notations
	\begin{itemize}
         \item The space $L^{p}(\Omega)$, $1\leq p\leq \infty$, is equipped with the norm
		\begin{equation*}
			\|v\|_{L^p(\Omega)} = \bra{\int_{\Omega}|v(x)|^pdx}^{\frac 1p} \quad \text{ when } \quad p<\infty,
		\end{equation*}
		and
		\begin{equation*}
			\|v\|_{L^\infty(\Omega)} = \underset{x\in\Omega}{\mathrm{ess\, sup}}|v(x)|.
		\end{equation*}
         \item When $1\leq p\leq \infty$ and $r>0$ is an integer, we define the space
              $$W^{r,p}(\Omega) := \abra{v\in L^p(\Omega): \pa_x^sv \in L^p(Q_{\tau,T}) \text{ for } s\in \N \text{ with } s \leq r},$$
         and when $r$ is not an integer, we use the definition in \cite[Chapter 2, Section 2]{ladyvzenskaja1988linear}.
		\item For any $0\leq\tau<T$, $Q_{\tau,T} := \Omega\times(\tau,T)$. When $\tau = 0$, we write $Q_T$ instead of $Q_{0,T}$.
		\item The space-time  space $L^{p}(Q_{\tau,T}) = L^p((\tau,T);L^p(\Omega))$, $1\leq p\leq \infty$, is equipped with the norm
		\begin{equation*}
			\|v\|_{L^p(Q_{\tau,T})} = \bra{\int_{\tau}^T\int_{\Omega}|v(x,t)|^pdxdt}^{\frac 1p} \quad \text{ when } \quad p<\infty,
		\end{equation*}
		and
		\begin{equation*}
			\|v\|_{L^\infty(Q_{\tau,T})} = \underset{(x,t)\in Q_{\tau,T}}{\mathrm{ess\, sup}}|v(x,t)|.
		\end{equation*}
		\item We denote the space-time space
		$$ W^{2,1}_{p}(Q_{\tau,T}) := \abra{v\in L^p(Q_{\tau,T}): \pa_t^r\pa_x^sv \in L^p(Q_{\tau,T}) \text{ for } r,s\in \N \text{ with } 2r + s \leq 2}, $$
		equipped with the norm
		$$ \|v\|_{W^{2,1}_{p}(Q_{\tau,T})} := \sum_{2r+s\leq 2}\norm{\pa_t^r\pa_x^s v}_{L^p(Q_{\tau,T})}.$$
	\end{itemize}

	\section{Proof of Theorem \ref{thm:main}} \label{sec2}
	\subsection{Global existence of bounded solutions}\label{sec:global} 
\ 
\medskip

In this subsection, we show the existence of solutions to system (\ref{sys_original}) which are bounded on  $Q_T$ for all $T>0$. 

The following lemma is useful in our analysis.
\begin{lemma}(\cite[Lemma 3.3]{ladyvzenskaja1988linear} and \cite[Theorem 3]{simon1986compact})\label{embedding}
	Assume $1<p<\infty$. There exists a constant $C>0$, only depending on
$T-\tau,\Omega,p,n$, such that for all $v\in W_p^{2,1}(Q_{\tau,T})$,
	\begin{itemize}
		\item[(a)]
		\begin{equation*}
			\norm{v}_{L^q(Q_{\tau,T})} \leq C\, \norm{v}_{W^{2,1}_p(Q_{\tau,T})}
		\end{equation*}
		where
		\begin{equation*}
			q = \begin{cases}
				\frac{(n+2)p}{n+2-2p} &\text{ if } p<(n+2)/2,\\
				<+\infty \text{ arbitrary } &\text{ if } p = (n+2)/2,\\
				+\infty &\text{ if } p > (n+2)/2.
			\end{cases}
		\end{equation*}
		\item[(b)]
		\begin{equation*}
		\norm{\na v}_{L^q(Q_{\tau,T})} \leq C\, \norm{v}_{W^{2,1}_p(Q_{\tau,T})}
		\end{equation*}
		where
		\begin{equation*}
		q = \begin{cases}
		\frac{(n+2)p}{n+2-p} &\text{ if } p<n+2,\\
		<+\infty \text{ arbitrary }  &\text{ if } p = n+2,\\
		+\infty &\text{ if } p > n+2.
		\end{cases}
		\end{equation*}
	\end{itemize}
Furthermore, when $p>n+2$, the space $W_p^{2,1}(Q_{\tau,T})$ is compactly embedded in $C^{1,0}(\overline\Omega\times [0,T])$.
\end{lemma}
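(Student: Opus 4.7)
The plan is to reduce both embeddings to the parabolic whole space $\R^{n+1}$, where the anisotropic scaling $(x,t)\mapsto (\lambda x,\lambda^2 t)$ renders $\pa_t$ of the same weight as $\Delta$. Under this scaling the space $W_p^{2,1}$ behaves like a Sobolev space of order two in an effective dimension $n+2$, which predicts the critical exponents $q=\frac{(n+2)p}{n+2-2p}$ in (a) and $q=\frac{(n+2)p}{n+2-p}$ in (b). The first step is to build a bounded extension operator $E\colon W_p^{2,1}(Q_{\tau,T})\to W_p^{2,1}(\R^{n+1})$ with image supported in a fixed parabolic ball. This is feasible thanks to the $C^{2+\alpha}$ regularity of $\pa\Omega$ and the finiteness of the time interval: one straightens the spatial boundary in local charts and then performs a higher-order Stein-type reflection across $\pa\Omega$ and across $\{t=\tau\}$, $\{t=T\}$, with the reflection weights chosen so that $\pa_t v$ and the second-order spatial derivatives extend as $L^p$ functions without creating distributional jumps at the seams.

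For the extended function I set $F:=(\pa_t-\Delta+1)v$, so that $v=K\ast F$ where $K$ is the fundamental solution of $\pa_t-\Delta+1$ on $\R^{n+1}$. Writing $\rho(x,t):=|x|+|t|^{1/2}$ for the parabolic quasi-distance, one has the pointwise bounds $|K(x,t)|\lesssim \rho(x,t)^{-n}$ and $|\na_x K(x,t)|\lesssim \rho(x,t)^{-n-1}$, so that $K$ and $\na_x K$ play the roles of Riesz potentials of order $2$ and $1$ respectively in effective dimension $n+2$. The parabolic Hardy--Littlewood--Sobolev inequality, i.e. the boundedness of these anisotropic Riesz potentials from $L^p$ to $L^q$ under the scaling relation $\tfrac1q=\tfrac1p-\tfrac{k}{n+2}$ with $k\in\{1,2\}$, then yields (a) when $p<(n+2)/2$ and (b) when $p<n+2$; the estimate $\|F\|_{L^p(\R^{n+1})}\le \|v\|_{W_p^{2,1}(\R^{n+1})}$ closes the loop.

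In the supercritical regimes ($p>(n+2)/2$ for (a), $p>n+2$ for (b)) the same pointwise kernel estimates fall into the Morrey range of the Riesz potential and produce uniform, in fact H\"older continuous, bounds; the borderline cases $p=(n+2)/2$, $p=n+2$ are handled by interpolating between a subcritical and a supercritical exponent. For the compactness statement when $p>n+2$, I would upgrade (b) to a uniform parabolic H\"older estimate $\|\na v\|_{C^{\alpha,\alpha/2}(\overline\Omega\times[0,T])}\le C\|v\|_{W_p^{2,1}}$ with $\alpha=1-\tfrac{n+2}{p}$ and then invoke Arzel\`a--Ascoli; alternatively, Simon's criterion combining uniform spatial regularity with an $L^p$-in-time bound on $\pa_t v$ applies directly. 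I expect the main obstacle to be the construction of the extension across the initial and final time slices: the parabolic anisotropy precludes a naive Lipschitz or simple even-reflection extension, and the weights of the higher-order reflection have to be tuned simultaneously against $\pa_t v$ and against all second-order spatial derivatives, which is exactly the technical heart of the Ladyzhenskaya--Solonnikov--Ural'tseva embedding theorem.
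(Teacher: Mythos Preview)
The paper does not prove this lemma at all: it is stated as a citation to \cite[Lemma 3.3]{ladyvzenskaja1988linear} and \cite[Theorem 3]{simon1986compact} and is used throughout as a black box. So there is no ``paper's own proof'' to compare against; what you have written is essentially a sketch of the argument behind the cited references.

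Your outline is the standard one and is correct in spirit: the anisotropic scaling identifies $W^{2,1}_p$ with a second-order space in effective dimension $n+2$, the fundamental-solution representation $v=K\ast F$ with $F=(\pa_t-\Delta+1)v$ reduces (a) and (b) to mapping properties of parabolic Riesz potentials of orders $2$ and $1$, and the exponents $\frac{(n+2)p}{n+2-2p}$ and $\frac{(n+2)p}{n+2-p}$ fall out of the parabolic Hardy--Littlewood--Sobolev inequality. The supercritical H\"older bound with exponent $\alpha=1-\frac{n+2}{p}$ plus Arzel\`a--Ascoli is exactly how one obtains the compact embedding into $C^{1,0}$. You are also right that the genuinely delicate step is the extension operator across $\pa\Omega$ and across the time endpoints, and that the reflection weights must be chosen to preserve $\pa_t v$ and the second spatial derivatives simultaneously; this is indeed the technical core of the Lady\v{z}enskaja--Solonnikov--Ural'ceva treatment. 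One minor caveat: your kernel bounds $|K|\lesssim\rho^{-n}$ and $|\na_x K|\lesssim\rho^{-(n+1)}$ are valid only near the parabolic origin and must be supplemented by the exponential decay at infinity (coming from the $+1$ in the operator, or from the compact support of $Ev$) to make the convolution estimates go through; otherwise the sketch is sound.
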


Next, we prove the
\begin{proposition}\label{prop11}
Under the assumptions of Theorem \ref{thm:main}, and picking $T>0$,
system \eqref{sys_original} has a nonnegative (for each component) strong solution which is bounded on $Q_T$. More precisely,  there exists $C_T>0$  (depending on $\Omega$, $T$, $n$, $a$, $b$, $\var_0$, $\beta$, $\delta$, $\|m_0\|_{L^{\infty}(\Omega)}$, $\|c_0\|_{W^{ 1,\infty}(\Omega)}$, $\|d_0\|_{L^{\infty}(\Omega)}$), such that
		\begin{equation} \label{pourun}
			\|m\|_{L^\infty(Q_T)}+\|c\|_{L^{\infty}((0,T); W^{1,\infty}(\Omega))}+\|d\|_{L^\infty(Q_T)} \leq C_T.
		\end{equation}
\end{proposition}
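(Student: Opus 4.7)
The plan is to construct a solution by an approximation or fixed-point argument, for which local-in-time existence is standard, and then to derive uniform a priori bounds on $Q_T$ that rule out blow-up and yield \eqref{pourun}. Nonnegativity will not be an issue: for $d$ it follows from the explicit ODE with nonnegative coefficients, for $c$ from the linear heat equation with nonnegative source $\delta d+\beta m$, and for $m$ from the fact that $f(0)=0$ makes the chemotactic flux compatible with the maximum principle at the level $m=0$. A comparison argument on $\partial_t d = g(m)(1-d)$ with $g\geq 0$ gives $0\leq d\leq \max\{1,\|d_0\|_{L^\infty}\}$ on $Q_T$. Integrating the $m$-equation over $\Omega$ and using the Neumann conditions (which eliminate both the diffusion and the chemotactic fluxes) yields
\[
\frac{d}{dt}\int_\Omega m = \int_\Omega m - \int_\Omega m^a ,
\]
from which a Gronwall-type manipulation gives a bound on $m$ in $L^a(Q_T)$ depending only on $\|m_0\|_{L^1}$ and $T$. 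Feeding this bound, together with the $L^\infty$ bound on $d$, into the linear $c$-equation, $L^a$-parabolic maximal regularity on the Neumann heat problem delivers $c\in W^{2,1}_a(Q_T)$ and, via Lemma~\ref{embedding}(b), an $L^q(Q_T)$ estimate on $\nabla c$ with $q=q(a,n)$.

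\textbf{Duality method (the main obstacle).} The next step is to upgrade the $L^a$ integrability of $m$ to $L^p$ for every finite $p$. Rewriting the $m$-equation as
\[
\partial_t m - \Delta m = m - m^a - \chi\,\nabla\cdot(f(m)\nabla c),
\]
I would apply a Pierre-type duality argument: for a nonnegative test function $\varphi\in L^{p'}(Q_T)$, let $w$ solve the backward Neumann problem $-\partial_t w-\Delta w=\varphi$, $w(\cdot,T)=0$, $\nabla w\cdot\nu=0$. Testing the $m$-equation against $w$ and integrating by parts produces
\[
\int_{Q_T} m\,\varphi \leq \int_\Omega m_0\, w(\cdot,0) + \int_{Q_T} m\, w + \chi\int_{Q_T} f(m)\,\nabla c\cdot\nabla w ,
\]
where the $-m^a$ term has been dropped as it is beneficial. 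Maximal regularity for the dual problem gives $\|w\|_{W^{2,1}_{p'}(Q_T)}\leq C\,\|\varphi\|_{L^{p'}(Q_T)}$, and H\"older combined with $|f(m)|\leq \gamma m^b$ reduces control of $\|m\|_{L^p(Q_T)}$ to control of $\|m^b\|_{L^{r_1}(Q_T)} \|\nabla c\|_{L^{r_2}(Q_T)}$ for exponents $r_1,r_2$ chosen via Lemma~\ref{embedding}. Condition \eqref{e2}, namely $\max\{b-1,b/2\}<a/(n+2)$, is exactly the scaling constraint that allows $b\, r_1 < p$ for an admissible choice of $r_1$, so that the $m^b$ factor can be bootstrapped from a slightly smaller $L^{p_0}$ norm already under control. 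Iterating finitely many times starting from the $L^a(Q_T)$ bound should give $m\in L^p(Q_T)$ for every $p<\infty$. The delicate bookkeeping between the two branches of \eqref{e2}, the branch $b-1<a/(n+2)$ governing the regime $b\geq 1$ (where the $L^\infty$-loss in $\nabla c$ must be split against $m^b$) and $b/2<a/(n+2)$ governing $b\leq 1$ (where the gradient is placed in $L^2$ via Cauchy--Schwarz), is where I expect the main technical difficulty.

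\textbf{From $L^p$ to $L^\infty$.} Once $m$ is known to lie in $L^p(Q_T)$ for some $p>n+2$, one more application of parabolic maximal regularity for the linear $c$-equation gives $c\in W^{2,1}_p(Q_T)$, and Lemma~\ref{embedding}(b) with $p>n+2$ yields $c\in L^\infty(0,T;W^{1,\infty}(\Omega))$, in particular a uniform bound on $\nabla c$. Returning to the $m$-equation, the chemotactic term is now of the form $\chi\,\nabla\cdot(f(m)\nabla c)$ with bounded $\nabla c$, so expanding it produces a drift $\chi f'(m)\nabla m\cdot \nabla c$ and a zeroth-order term $\chi f(m)\Delta c$; combined with the super-linear absorption $-m^a$, a Moser iteration (or a further application of Lemma~\ref{embedding}(a) on successively larger $L^p$ scales) yields $m\in L^\infty(Q_T)$. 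Putting these three bounds together delivers \eqref{pourun} and concludes the proof of Proposition~\ref{prop11}.
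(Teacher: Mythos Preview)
Your overall strategy matches the paper's closely: nonnegativity, the $L^\infty$ bound on $d$, the $L^a(Q_T)$ bound on $m$ from integrating the first equation, maximal regularity for $c$ giving $\nabla c\in L^{q_1}(Q_T)$, a Pierre-type duality to push $m$ into every $L^p(Q_T)$, and a final bootstrap to $L^\infty$. The paper carries out exactly this scheme (within a Leray--Schauder argument with a truncated $f_r$ to produce the solution, rather than a local-existence-plus-continuation argument, but that is a packaging difference).

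The one genuine gap is in how you close the duality estimate and in your reading of condition \eqref{e2}. The two inequalities $b-1<a/(n+2)$ and $b/2<a/(n+2)$ are \emph{not} alternatives corresponding to $b\ge 1$ versus $b\le 1$, nor does the second have anything to do with Cauchy--Schwarz on $\nabla c$. Both are needed simultaneously: they guarantee the existence of a single $s\in(0,1)$ with $b-s<a/(n+2)$ and $b-s<2a/(n+2)-1$. With such an $s$, the paper splits $m^b=m^{b-s}\cdot m^s$ in the chemotaxis term of the duality; the factor $m^{b-s}$ is controlled by the known $L^a$ bound (the first inequality makes $(n+2)(b-s)<a$, so the needed H\"older exponent $q_4$ in \eqref{q4} is finite), while the second inequality forces $q_4<q_1$ so that the available $\nabla c\in L^{q_1}$ estimate suffices. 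The $\int m\phi$ term is treated by an analogous split $m=m^{1-\vartheta}m^\vartheta$ with $\vartheta=1-2a/(n+2)\in(0,1)$ when $a<(n+2)/2$. The upshot is an inequality of the form
\[
\|m\|_{L^p(Q_T)}\le C\bigl(1+\|m\|_{L^p(Q_T)}^{\vartheta}+\|m\|_{L^p(Q_T)}^{s}\bigr),\qquad 0<\vartheta,s<1,
\]
which closes in one step by Young's inequality, for \emph{every} $p<\infty$. There is no finite iteration from $L^{p_0}$ to $L^{p_1}$; your proposed bootstrap would require re-estimating $\nabla c$ at each stage and tracking a moving target of exponents, and it is not clear it terminates without essentially rediscovering the sublinear closure above. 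Once you fix this step, the remainder of your outline (maximal regularity for $c$ with $p>n+2$, then expanding the chemotaxis term and applying parabolic regularity to the $m$-equation) coincides with the paper.
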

\medskip

 {\it{Proof of Proposition \ref{prop11} }}:
	For a given $r>0$, define $h_r \in C^\infty(\R_+,[0,1])$ such that $h_r(y) = 1$ for $0\leq y\leq r$, and $h_r(y) = 0$ for $y\geq 2r$. We also use the standard convention $m_+ := \max\{0; m\}$. Consider the system
	\begin{equation}\label{sys_r}
	\left\{
	\begin{aligned}
		\pa_t m &=\Delta m + m_+(1-m_+^{a-1}) - \chi\na\cdot(f_r(m_+)\na c), &&(x,t)\in Q_T,\\
		\pa_t c &= \eps_0 \Delta c + \delta d - c + \beta m_+, &&(x,t)\in Q_T,\\
		\pa_t d &= g(m_+)(1-d), &&(x,t)\in Q_T,\\
		\na m\cdot \nu &= \na c\cdot \nu = 0, && (x,t)\in \pa\Omega\times(0,T),\\
		m(x,0) &= m_0(x), \,\, c(x,0) = c_0(x), \,\, d(x,0) = d_0(x), &&x\in\Omega,
	\end{aligned}
	\right.
	\end{equation}
	where the function $f_r$ is defined as
	\begin{equation*}
		f_r(y) := h_r(f(y)).
	\end{equation*}
	
\medskip

	We will prove, thanks to Leray-Schauder's theorem (cf. \cite[Theorem 10.3]{gilbarg2015elliptic}), that \eqref{sys_r} has a nonnegative (for each component) solution which is bounded in sup-norm {\it independent of $r>0$}, and consequently prove the existence of a solution to the original system \eqref{sys_original} on $Q_T$.
	To this end, we define
	\begin{equation*}
		\Y:= C^{1,0}(\overline\Omega\times [0,T] ) \quad \text{ and } \X := \Y\times \Y,
	\end{equation*}
where $C^{1,0}$ is the space of continuous functions such that their gradient (with respect to space) is also continuous,
	associated with the natural norm (making it a Banach space)
	\begin{equation*}
		\|u\|_{\Y} = \sup_{t\in[0,T]}\|u(t)\|_{L^\infty(\Omega)} + \sup_{t\in [0,T]}\|\na u(t)\|_{L^\infty(\Omega)},
	\end{equation*}
	and 
	\begin{equation*}
		\F_r: \X \times [0,1] \to \X \quad \text{ via } \quad \F_r(m,c, \lambda):= (\lambda\,\mm, \lambda\, \cc),
	\end{equation*}
	where $(\mm,\cc)$ is given (in a unique way) by solving first an ODE (for $d$), and then successively (for $\cc$ and $\mm$, in that order) 
two Neumann problems for the heat equation with a source. More precisely, $(\mm,\cc)$ is the (unique) solution on $Q_T$ of the system
	\begin{equation}\label{e6_r}
	\left\{
	\begin{aligned}
	\pa_t	\mm &= \Delta \mm + m_+\bra{1-m_+^{a-1}} - \chi \na\cdot(f_r(m_+)\na \cc), &&(x,t)\in Q_T,\\
	\pa_t	\cc &= \eps_0 \Delta \cc + \delta d - \cc + \beta m_+, &&(x,t)\in Q_T,\\
	\pa_t	d &= g(m_+)(1-d), &&(x,t)\in Q_T,\\
		\na\mm \cdot \nu &= \na \cc \cdot \nu = 0, &&(x,t)\in \pa\Omega\times(0,T),\\
		\mm(x,0) &= m_0(x), \cc(x,0) = c_0(x), d(x,0) = d_0(x), &&x\in\Omega.
	\end{aligned}
	\right.
	\end{equation}
Note first that $ \F_r(m,c, 0) = 0$.
Then,  regardless of $m$, we always have that
	\begin{equation}\label{e7}
		0 \leq d(x,t) \leq  \mu := \max\abra{1, \|d_0\|_{L^\infty(\Omega)}}. 
	\end{equation}

	Also for any $1<p\le \tilde p$, there exists $C_p>0$ depending on $\|m\|_{\Y}$ and $r$ (and $\Omega, T, m_0, c_0, \mu$) such that, thanks to maximal regularity results,
	\begin{equation}\label{e8}
		\|\cc\|_{W^{2,1}_{p}(Q_T)} \leq C_p,
	\end{equation}
	and
	\begin{equation}\label{e9}
		\|\mm\|_{W^{2,1}_{p}(Q_T)} \leq C_p.
	\end{equation}
	Thanks to the embedding in Lemma \ref{embedding},
	\begin{equation}\label{em}
		W^{2,1}_p(Q_T) \hookrightarrow \Y \text{ compactly when } p >n+2.
	\end{equation}
	 So, we see that since $\tilde p>n+2$, the map  $\F_r$ sends bounded sets in $\X$ into compact sets of $\X$. We now show that for any $\lambda\in [0,1]$, $\F_r(\cdot,\cdot,\lambda)$ is continuous from $\X$ to $\X$. Let $(m_1, c_1), (m_2,c_2)\in \X$ and $(\lambda\mm_1, \lambda\cc_1)= \F_r(m_1,c_1,\lambda)$, $(\lambda\mm_2,\lambda\cc_2) = \F_r(m_2,c_2,\lambda)$. 
We have
	\begin{equation}\label{difference}
	\begin{cases}
		\pa_t(\mm_1 - \mm_2) = \Delta(\mm_1 - \mm_2) + \sbra{m_{1+}(1-m_{1+}^{a-1}) - m_{2+}(1-m_{2+}^{a-1}) }\\
		\hspace{5.45cm} + \chi\na\cdot\sbra{f_r(m_{1+})\na \cc_1 - f_r(m_{2+})\na \cc_2},\\
		\pa_t(\cc_1 - \cc_2) = \eps_0\Delta (\cc_1 - \cc_2) + \delta(d_1 - d_2) - (\cc_1 - \cc_2) + \beta(m_{1+} - m_{2+}),\\
		(d_1 - d_2)_t = g(m_{1+})(1-d_1) - g(m_{2+})(1-d_2).
	\end{cases}
	\end{equation}
	By rewriting $g(m_{1+})(1-d_1) - g(m_{2+})(1-d_2) = g(m_{1+})(d_2 - d_1) + (g(m_{1+}) - g(m_{2+}))\, (1-d_2)$, and using $g\in C^1([0,\infty))$, we obtain 
	\begin{equation*}
	\begin{aligned}
		d_1(x,t) - d_2(x,t) =  \int_0^te^{-\int_{s}^tg(m_{1+}(r))dr}g'(\theta(x, s))(m_{1+}(s)-m_{2+}(s))(1-d_2(s))\,ds,
	\end{aligned}
	\end{equation*}
	where $\theta(x,s)$ is between $m_{1+}(x,s)$ and $m_{2+}(x,s)$. Therefore, there exists $C_T>0$ (which can depend on the norm $\|m_{1}\|_{\Y}$) such that
	\begin{equation*}
		\|d_1 - d_2\|_{L^\infty(Q_T)} \leq C_{T}\|m_{1+} - m_{2+}\|_{L^\infty(Q_T)} \leq C_T\|m_{1}-m_{2}\|_{\Y}.
	\end{equation*}
	By maximal regularity results and \eqref{em}
	\begin{equation}\label{estimate_C}
	\begin{aligned}
		\|\cc_1 - \cc_2\|_{\Y} \leq C_T\|\cc_1 - \cc_2\|_{W^{2,1}_p(Q_T)} & \leq C_T\bra{\|d_1-d_2\|_{L^p(Q_T)} + \|m_{1+} - m_{2+}\|_{L^p(Q_T}}\\
		& \leq C_T\|m_{1} - m_{2}\|_{\Y}.
	\end{aligned}
	\end{equation}
	For the equation satisfied by $\mm_1 - \mm_2$, we write 
	\begin{equation*}
		\abs{m_{1+}(1-m_{1+}^{a-1}) - m_{2+}(1-m_{2+}^{a-1})} \le \abs{m_{1} - m_{2}} 
+ \abs{m_{1} - m_{2}}\, \max(m_{1+}^{a-1}, m_{2+}^{a-1}),
	\end{equation*}
	and
	\begin{equation*}
	\begin{aligned}
		&\abs{\na\cdot\sbra{f_r(m_{1+})\na \cc_1 - f_r(m_{2+})\na \cc_2}}\\
		&\le \abs{f_r(m_{1+})}\abs{\Delta(\cc_1 - \cc_2)} + \abs{f_r(m_{1+}) - f_r(m_{2+})}\abs{\Delta \cc_2} + \abs{f_r'(m_{1+})}\abs{\na m_{1+}}\abs{\na(\cc_1-\cc_2)}\\
		&\quad + \abs{f_r'(m_{1+})}\abs{\na(m_{1+}-m_{2+})}\abs{\na \cc_2} + \abs{f_r'(m_{1+}) - f_r'(m_{2+})}\abs{\na m_{2+}}\abs{\na \cc_2} .
	\end{aligned}
	\end{equation*}
	Using the fact that $f\in C^2([0,\infty))$,  estimate \eqref{estimate_C}, and maximal regularity results we get
	\begin{equation}\label{estimate_M}
	\|\mm_1 - \mm_2\|_{\Y} \leq C_T\|\mm_1 - \mm_2\|_{W^{2,1}_{\tilde p}(Q_T)} \leq C_T\|m_{1}-m_{2}\|_{\Y}.
	\end{equation}
	It follows from \eqref{estimate_C} and \eqref{estimate_M} that $\F_r(\cdot,\cdot,\lam)$ is continuous from $\X$ to $\X$ for any $\lam\in [0,1]$.
\medskip

We now check the last assumption in Leray-Schauder's theorem.  We consider therefore
	\begin{equation}\label{def_Z}
	\Z:= \abra{(m,c)\in \X \,:\, (m,c) = \F_r(m,c, \lambda) \; \text{ where } \; 0<\lambda \leq 1 },
	\end{equation}
and will show that $\Z$ is bounded in $\X$. 
\medskip

	Note that if $(m,c)\in \Z$, then $(m,c) = (\lambda \mm, \lambda \cc)$, where $(\mm,\cc)$ solves \eqref{e6_r}. 
	Therefore, by multiplying the equations in \eqref{e6_r} by $\lambda$, we obtain (for $(m,c) \in \Z$)
	\begin{equation}\label{e10_r}
	\left\{
	\begin{aligned}
		\pa_t m &= \Delta m + \lambda\sbra{m_+(1-m_+^{a-1}) - \chi\na\cdot(f_r(m_+)\na \cc)}, &&(x,t)\in Q_T,\\
		\pa_t c &= \eps_0\Delta c - c + \lambda(\delta d + \beta m_+),&&(x,t)\in Q_T,\\
		\pa_t d &= g(m_+)(1-d), &&(x,t)\in Q_T,\\
		\na m\cdot \nu &=\na c\cdot \nu = 0, &&(x,t)\in \pa\Omega\times(0,T),\\
		m(x,0) &= \lambda m_0(x), c(x,0) = \lambda c_0(x), d(x,0) = d_0(x), &&x\in\Omega.
	\end{aligned}
	\right.
	\end{equation}
	Since $d\geq 0$, $m_+\geq 0$, and $c_0 \geq 0$, we obtain immediately  (for $(m,c) \in \Z$) 
that $c\geq 0$. We now show that $m$ is also nonnegative. Indeed, denote by $m_- = \max\{0,-m\}$, and multiply the equation 
satisfied by $m$ 
in \eqref{e10_r} by $m_-^2$. Then, an integration by parts gives for all $t \in [0,T]$,
	\begin{equation*}
		-\frac 13\int_{\Omega}m_-^3(x,t)dx - 2\int_0^t\int_{\Omega}m_-|\na m_-|^2dxdt \geq 0.
	\end{equation*}
	Therefore, $m_- = 0$ and thus $m\geq 0$. It follows from the nonnegativity of $m$, $c$ and $d$ (and system \eqref{e10_r}) that $(m,c,d)$ solves the system
	\begin{equation}\label{e11_r}
	\left\{
	\begin{aligned}
	\pa_t m &= \Delta m + \lambda m(1-m^{a-1}) - \chi\na\cdot(f_r(m)\na c), &&(x,t)\in Q_T,\\
	\pa_t c &= \eps_0\Delta c - c + \lambda(\delta d + \beta m),&&(x,t)\in Q_T,\\
	\pa_t d &= g(m)(1-d), &&(x,t)\in Q_T,\\
	\na m\cdot \nu &=\na c\cdot \nu = 0, &&(x,t)\in \pa\Omega\times(0,T),\\
	m(x,0) &= \lambda m_0(x), c(x,0) = \lambda c_0(x), d(x,0) = d_0(x), &&x\in\Omega.
	\end{aligned}
	\right.
	\end{equation}

In order to show that $\Z$ is bounded, we propose a series of lemmas. We start with the

	\begin{lemma}\label{l21}
 Under the assumptions of Theorem \ref{thm:main}, and supposing that $\lambda \in ]0,1]$, we consider
	 $(m,c,d)$ a strong, nonnegative (for each component) solution to \eqref{e11_r}. Then there exists $K_1>0$ (depending on $T$, $a, \beta, \delta$ and $\|m_0\|_{L^1(\Omega)}$, $\|c_0\|_{L^1(\Omega)}$, $\|d_0\|_{L^{\infty}(\Omega)}$, but not depending on $\lambda$) such that
		\begin{equation} \label{nou13}
			\sup_{t\in[0,T]}\bra{\|m(\cdot, t)\|_{L^1(\Omega)} + \|c(\cdot, t)\|_{L^1(\Omega)}} \leq K_1,
		\end{equation}
		and if $0\leq\tau<T$ then
		\begin{equation*}
			\|m\|_{L^a(Q_{\tau,T})} \leq \sbra{K_1(T-\tau+1)}^{1/a}.
		\end{equation*}
	\end{lemma}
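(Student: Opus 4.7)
The approach is to integrate the PDEs of \eqref{e11_r} over $\Omega$ and exploit the dissipative logistic term $-\lam m^a$ to extract the two bounds on $m$; the $L^1$ bound on $c$ then follows easily from the linearity of its equation in the source. Recall from \eqref{e7} that $0 \le d \le \mu := \max\{1, \|d_0\|_{L^\infty(\Omega)}\}$, which is used freely below. Setting $Y(t) := \int_\Omega m(x,t)\,dx$, the Neumann boundary conditions cause $\int_\Omega \Delta m\,dx$ and $\int_\Omega \na\cdot(f_r(m)\na c)\,dx$ to vanish, so integrating the $m$-equation over $\Omega$ produces
\begin{equation*}
Y'(t) \,=\, \lam \int_\Omega m\,dx \,-\, \lam \int_\Omega m^a\,dx \,\le\, \lam\, Y(t).
\end{equation*}
Gronwall's inequality and the initial condition $Y(0) = \lam \|m_0\|_{L^1(\Omega)}$ then give $Y(t) \le \lam \|m_0\|_{L^1(\Omega)}\, e^{\lam t} \le \|m_0\|_{L^1(\Omega)}\, e^{T}$ for $t \in [0,T]$, which is the desired $\lam$-independent $L^1$ bound on $m$.

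For the $L^a$ bound, I integrate the displayed identity from $\tau$ to $T$ to obtain
\begin{equation*}
\lam \int_\tau^T\!\!\int_\Omega m^a\,dx\,dt \,=\, \lam \int_\tau^T Y(t)\,dt \,+\, Y(\tau) \,-\, Y(T) \,\le\, \lam (T-\tau)\, \|m_0\|_{L^1(\Omega)}\, e^T \,+\, Y(\tau),
\end{equation*}
using $Y(T) \ge 0$ and the previous bound. Dividing by $\lam$ produces a term $Y(\tau)/\lam$ which is the principal obstacle, since $\lam$ may be arbitrarily small; the resolution is that the same Gronwall estimate gives $Y(\tau) \le \lam \|m_0\|_{L^1(\Omega)}\, e^{\lam \tau}$, so in fact $Y(\tau)/\lam \le \|m_0\|_{L^1(\Omega)}\, e^{T}$ \emph{uniformly} in $\lam \in (0,1]$. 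Combining the two estimates yields $\|m\|_{L^a(Q_{\tau,T})}^a \le K_1(T-\tau+1)$ with $K_1$ independent of $\lam$.

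Finally, setting $C(t) := \int_\Omega c(x,t)\,dx$ and integrating the $c$-equation over $\Omega$ (Neumann again kills the $\eps_0 \int_\Omega \Delta c$ term) gives
\begin{equation*}
C'(t) + C(t) \,=\, \lam\,\delta \int_\Omega d\,dx \,+\, \lam\,\beta \int_\Omega m\,dx \,\le\, \delta\, \mu\, |\Omega| \,+\, \beta\, \|m_0\|_{L^1(\Omega)}\, e^T,
\end{equation*}
where I used $\lam \le 1$ together with the preceding bounds. Integrating this linear differential inequality bounds $C(t)$ uniformly on $[0,T]$ by a constant depending only on the data listed in the statement, completing the proof of \eqref{nou13} after enlarging $K_1$ if needed.
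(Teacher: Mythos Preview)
Your proof is correct and handles the only real subtlety (the division by $\lam$) properly: the observation that $Y(\tau)\le \lam\|m_0\|_{L^1(\Omega)}e^{\lam\tau}$ is exactly what is needed to make the $L^a$ bound $\lam$-independent.

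The paper proceeds slightly differently. Rather than treating $m$ and $c$ separately, it sums the two equations and uses the elementary inequality $(\beta+1)m-m^a\le k_a-m$ (valid since $a>1$) to obtain
\[
\frac{d}{dt}\int_\Omega(m+c)\,dx \;\le\; \lam\,|\Omega|\,(\delta\mu+k_a)\,-\,\lam\int_\Omega m\,dx\,-\int_\Omega c\,dx \;\le\; \lam\,|\Omega|\,(\delta\mu+k_a).
\]
Since the initial data are $\lam m_0,\lam c_0$, this gives $\int_\Omega(m+c)(t)\,dx\le \lam K_1$ with $K_1$ growing only \emph{linearly} in $T$; the $\lam$ then cancels directly in the $L^a$ estimate. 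Your Gronwall argument instead produces $K_1\sim e^T$. For the present lemma this makes no difference, and your separate treatment of $c$ via $C'+C\le\text{const.}$ is arguably cleaner. The paper's sharper $T$-dependence does, however, foreshadow the uniform-in-time argument of Section~\ref{sec:uniform}, where the same differential inequality (with $\lam=1$) is reused to get a bound on $\|m\|_{L^1}+\|c\|_{L^1}$ that is independent of~$T$.
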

	\begin{proof}
	Note that
	\begin{equation}\label{e12}
	\frac{d}{dt}\int_{\Omega} (m+c)dx = \int_{\Omega}\sbra{\lambda(\delta d + (\beta+1)m - m^a) - c}dx.
	\end{equation}
	Since $a>1$, there exists $k_a>0$ such that
	\begin{equation*}
		(\beta+1)m - m^a \leq k_a - m.
	\end{equation*}
	Applying this and the fact that $0\leq d \leq \mu$ to \eqref{e12},
 we obtain some $K_1>0$
 such that (\ref{nou13}) holds.
	 In addition, integrating the equation satisfied by $m$ in \eqref{e11_r} gives
	 \begin{equation*}
		 \int_{\Omega}m(x,T)dx = \int_\tau^T\int_{\Omega}\lambda m(1-m^{a-1})dxdt + \lambda\int_{\Omega}m(x,\tau)dx.
	 \end{equation*}
	 Consequently, 
	 \begin{equation}\label{e15}
	 	\|m\|_{L^a(Q_{\tau,T})}^a \leq K_1(T-\tau+1).	 
	 \end{equation}
	 \end{proof}

We now turn to the

 	\begin{lemma}\label{l22}
 Under the assumptions of Theorem \ref{thm:main}, and supposing that $\lambda \in ]0,1]$, we consider
	 $(m,c,d)$ a strong, nonnegative (for each component) solution to \eqref{e11_r}. Then there exists $K_2>0$ (depending on $\Omega$, $T$,  $n$, $a$, $\var_0, \beta, \delta$ and $\|c_0\|_{W^{(2-2/a),a}(\Omega)}$, $\|d_0\|_{L^{\infty}(\Omega)}$, but not depending on $\lambda$) such that
	    \begin{equation}\label{e16}
	 	\|c\|_{W^{2,1}_{a}(Q_T)} + \|\na c\|_{L^{q_1}(Q_T)} \leq K_2\,K_1\,(T+1),
	 	\end{equation}
	 	where 
	 	\begin{equation}\label{q1}
	 	q_1 = \left\{
	 	\begin{aligned}&\frac{(n+2)a}{n+2-a}  &&\text{ if } a < n+2,\\
	 	&<+\infty \text{ arbitrary} &&\text{ if } a =  n+2, \\
         & = +\infty && \text{ if } a > n+2
	 	\end{aligned}\right.
	 	\end{equation}	
	\end{lemma}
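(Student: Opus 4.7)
The equation satisfied by $c$ in \eqref{e11_r} is a linear, parabolic heat equation with homogeneous Neumann boundary conditions,
\begin{equation*}
\partial_t c - \eps_0 \Delta c + c = \lambda\,(\delta d + \beta m),
\end{equation*}
with initial datum $\lambda c_0$. The plan is to apply maximal $L^p$-regularity for this equation with $p=a$, then invoke Lemma~\ref{embedding}(b) to pass from a $W^{2,1}_a$-bound on $c$ to an $L^{q_1}$-bound on $\na c$.

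First, I would estimate the right-hand side in $L^a(Q_T)$. By Lemma~\ref{l21} we have $\|m\|_{L^a(Q_T)} \leq [K_1(T+1)]^{1/a}$, and by \eqref{e7} we have $0 \leq d \leq \mu$, so that $\|d\|_{L^a(Q_T)} \leq \mu\,|\Omega|^{1/a}\,T^{1/a}$. Combined with $0<\lambda\leq 1$, this yields
\begin{equation*}
\|\lambda(\delta d + \beta m)\|_{L^a(Q_T)} \leq \delta\,\mu\,|\Omega|^{1/a}\,T^{1/a} + \beta\,[K_1(T+1)]^{1/a}.
\end{equation*}
Since $\lambda c_0$ is bounded in $W^{(2-2/a),a}(\Omega)$ by $\|c_0\|_{W^{(2-2/a),a}(\Omega)}$ (independently of $\lambda\in(0,1]$), maximal $L^a$-regularity for the linear parabolic equation with Neumann boundary conditions (see, e.g., \cite[Chapter~IV]{ladyvzenskaja1988linear}) produces a constant $K_2>0$, depending on $\Omega,T,n,a,\eps_0,\beta,\delta$ and $\|c_0\|_{W^{(2-2/a),a}(\Omega)},\|d_0\|_{L^\infty(\Omega)}$, but not on $\lambda$, such that
\begin{equation*}
\|c\|_{W^{2,1}_a(Q_T)} \leq K_2\,K_1\,(T+1).
\end{equation*}

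Second, to obtain the $L^{q_1}$-bound on $\na c$, I would apply part (b) of Lemma~\ref{embedding} with $p=a$, which provides, up to enlarging $K_2$, the inequality
\begin{equation*}
\|\na c\|_{L^{q_1}(Q_T)} \leq C\,\|c\|_{W^{2,1}_a(Q_T)},
\end{equation*}
with $q_1$ defined by \eqref{q1} according to whether $a<n+2$, $a=n+2$, or $a>n+2$. Combining the two estimates yields \eqref{e16}.

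The argument is essentially routine once the correct maximal regularity statement is in hand; the only subtlety is to check that all constants can be chosen independently of $\lambda\in(0,1]$, which follows because $\lambda$ appears only as a bounded multiplicative factor in the source and the initial datum.
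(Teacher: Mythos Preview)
Your proposal is correct and follows essentially the same approach as the paper: apply maximal $L^a$-regularity to the linear equation for $c$ using the $L^a$-bound on the source from Lemma~\ref{l21} and the $L^\infty$-bound on $d$, then invoke Lemma~\ref{embedding}(b) to upgrade to the $L^{q_1}$-bound on $\nabla c$. The paper's proof is condensed to a single sentence, but your expanded version fills in exactly the details that are implicit there, including the observation that the $\lambda$-dependence drops out.
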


	\begin{proof}
	 Returning to the equation of $c$ in \eqref{e11_r}, and applying Lemma \ref{embedding}, 
we get \eqref{e16}. 
	 \end{proof}

We finally prove the

 	\begin{lemma}\label{lem:m_Lp}
 Under the assumptions of Theorem \ref{thm:main}, and supposing that $\lambda \in ]0,1]$, we consider
	 $(m,c,d)$ a  strong, nonnegative (for each component) solution to \eqref{e11_r}. 
 Then for any $1 \le p<\infty$, there exists $F_p^*>0$ (depending on $\Omega$, $T$, $n$, $p$, $a$, $b$, $\var_0$, $\beta$, $\delta$, $\|m_0\|_{W^{(2-2/\tilde{p}),\tilde p}(\Omega)}$, $\|c_0\|_{W^{(2-2/\tilde{p}),\tilde p}(\Omega)}$, $\|d_0\|_{L^{\infty}(\Omega)}$,  but not depending on $\lambda$) such that
 		\begin{equation} \label{estimp}
	 		\|m\|_{L^p(Q_T)} \leq F_p^*.
 		\end{equation}
 	\end{lemma}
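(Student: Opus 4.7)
\medskip\noindent\textbf{Plan for Lemma \ref{lem:m_Lp}.} The strategy is to prove \eqref{estimp} by a bootstrap argument based on the duality method of Pierre. The base case $p=a$ is the content of Lemma~\ref{l21}, so it suffices to show that any $L^{p_0}(Q_T)$-bound $\|m\|_{L^{p_0}(Q_T)}\le C_0$ with $p_0\ge a$ can be upgraded to an $L^{p_1}(Q_T)$-bound with $p_1>p_0$, the gain $p_1-p_0$ being uniformly bounded below on any finite interval of $p_0$. Iterating finitely many times then reaches any prescribed $p<\infty$.

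Given an $L^{p_0}$-bound on $m$, the linear $c$-equation in \eqref{e11_r} has source in $L^{p_0}(Q_T)$ (since $d\in L^\infty(Q_T)$ by \eqref{e7}), and maximal regularity together with Lemma~\ref{embedding}(b) yields $\|\nabla c\|_{L^{\mathbf{q}_0}(Q_T)}\le C_0'$ with $\mathbf{q}_0 = (n+2)p_0/(n+2-p_0)$ (or any finite exponent once $p_0\ge n+2$). For each nonnegative $\phi\in L^{p_1'}(Q_T)$ let $w$ solve the backward Neumann problem
\begin{equation*}
 -\partial_t w - \Delta w = \phi \text{ in } Q_T, \qquad w(\cdot,T)=0, \qquad \nabla w\cdot\nu = 0 \text{ on } \partial\Omega\times(0,T).
\end{equation*}
Then $w\ge 0$ by the maximum principle, $\|w\|_{W^{2,1}_{p_1'}(Q_T)}\le C\|\phi\|_{L^{p_1'}(Q_T)}$ by maximal regularity, and Lemma~\ref{embedding}(b) controls $\|\nabla w\|_{L^{\mathbf{r}_1}(Q_T)}$ with $\mathbf{r}_1 = (n+2)p_1'/(n+2-p_1')$. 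Testing the $m$-equation in \eqref{e11_r} against $w$ and integrating by parts (using the Neumann conditions on $\nabla m$, $\nabla c$ and $\nabla w$) produces the duality identity
\begin{equation*}
 \int_{Q_T} m\,\phi \,+\, \lambda\int_{Q_T} m^a\,w \;=\; \lambda\int_\Omega m_0\,w(\cdot,0) \,+\, \lambda\int_{Q_T} m\,w \,+\, \lambda\chi\int_{Q_T} f(m)\,\nabla c\cdot\nabla w.
\end{equation*}
Both terms on the left are nonnegative, and the reservoir $\lambda\int m^a w$ is what will absorb the dangerous chemotaxis contribution on the right.

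To close the iteration, assumption \eqref{assumption_f} bounds the chemotaxis term by $\chi\gamma\int m^b|\nabla c||\nabla w|$, which is then split by H\"older's inequality across the available $L^{p_0}$-bound on $m$, the $L^{\mathbf{q}_0}$-bound on $\nabla c$, and the $L^{\mathbf{r}_1}$-bound on $\nabla w$. When $b$ is too large for a direct H\"older split to close, we factor $m^b = (m^a w)^\theta \cdot m^{b-a\theta}w^{-\theta}$ for a suitable $\theta\in(0,1)$ and use Young's inequality to transfer a small piece onto the reservoir on the left. Condition \eqref{e2} is precisely what renders these exponents admissible with a strict gain: the bound $b/2<a/(n+2)$ governs the regime $b\le 2$ (direct H\"older), while $b-1<a/(n+2)$ dictates the largest admissible $\theta$ and governs the harder regime $b>2$. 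The remaining terms on the right-hand side are controlled by H\"older and parabolic Sobolev embedding for $w$, uniformly in $\lambda\in(0,1]$. Taking the supremum over $\phi$ with $\|\phi\|_{L^{p_1'}(Q_T)}\le 1$ yields $\|m\|_{L^{p_1}(Q_T)}\le F_{p_1}^*$ by duality. The main obstacle is the algebraic bookkeeping to verify that \eqref{e2} makes both regimes $b\le 2$ and $b>2$ close with a uniform lower bound on $p_1-p_0$ and without degradation as the iteration progresses; tracking the two competing Sobolev exponents $\mathbf{q}_0$ (from $c$) and $\mathbf{r}_1$ (from $w$) is the delicate point.
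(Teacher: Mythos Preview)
Your overall architecture—test the $m$-equation against a backward heat solution, bound $|f(m)|\le\gamma m^b$, and split by H\"older using the parabolic embeddings of Lemma~\ref{embedding}—matches the paper's, but the paper does \emph{not} iterate. It obtains \eqref{estimp} in a single duality step for every $p<\infty$. The device is to extract from \eqref{e2} a parameter $s\in(0,1)$ with $b-s<\frac{a}{n+2}$ and $b-s<\frac{2a}{n+2}-1$, and to factor $m^b=m^{b-s}\cdot m^s$ inside the chemotaxis integral: only $m^s$ is measured in the target $L^p$-norm, while $m^{b-s}$ is controlled by the $L^a$-bound of Lemma~\ref{l21} and $\nabla c$ by the $L^{q_1}$-bound of Lemma~\ref{l22}. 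Together with an analogous interpolation for the linear term $\int\phi\,m$, this yields
\[
\|m\|_{L^p(Q_T)}\;\le\;C\bigl(1+\|m\|_{L^p(Q_T)}^{\vartheta}+\|m\|_{L^p(Q_T)}^{s}\bigr),\qquad 0<\vartheta,s<1,
\]
which closes immediately by Young's inequality. The two conditions in \eqref{e2} are used jointly to produce an admissible $s\in(0,1)$; there is no case split on $b\le 2$ versus $b>2$, and the reservoir $\int m^a w$ is simply discarded. Your bootstrap could also be made to reach every $p$, but it is heavier bookkeeping for no gain.

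There is moreover a genuine flaw in your absorption mechanism. In \eqref{e11_r} the chemotaxis term carries \emph{no} factor $\lambda$ (only the logistic term does, since $c=\lambda\mathfrak c$ already absorbed one $\lambda$), so the correct duality identity is
\[
\int_{Q_T}m\,\phi+\lambda\int_{Q_T}m^a w\;=\;\lambda\!\int_\Omega m_0\,w(\cdot,0)+\lambda\!\int_{Q_T}m\,w+\chi\!\int_{Q_T}f_r(m)\,\nabla c\cdot\nabla w.
\]
Your ``reservoir'' $\lambda\int m^a w$ therefore degenerates as $\lambda\to 0$ while the term you want to absorb does not, so the transfer via Young cannot be made uniform in $\lambda\in(0,1]$—which is exactly what the lemma requires. (In fact, under \eqref{e2} the regime $b>2$ forces $a>(b-1)(n+2)>n+2$, whence $\nabla c\in L^\infty(Q_T)$ by Lemma~\ref{l22}; this is the \emph{easiest} case and needs no absorption at all.) Dropping the reservoir and running the direct H\"older split at each step does close under \eqref{e2}, but then you have recovered, in iterated form, what the paper's sublinear-power trick achieves in one stroke.
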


 	\begin{proof}
 		Note that from \eqref{e2}, it follows that there exists $s\in (0,1)$ (depending on $a$, $b$, $n$ only, and with $s$ close to $1$ if the inequality in \eqref{e2} is close to being an equality) satisfying
 	\begin{equation}\label{e3}
 	b-s < \frac{a}{n+2}, \quad \text{ and } \quad  
 	b-s<\frac{2a}{n+2} - 1.
 	\end{equation}
	It follows that
	 \begin{equation}\label{e18}
	 	b < s - 1 + \frac{2a}{n+2}
	 \end{equation}
	 and
	 \begin{equation}\label{e19}
	 	(n+2)(b-s)<a.
	 \end{equation}
	 Note also that 
	 \begin{equation*} \label{ut}
		 1 < \frac{2}{n+2} - \frac{b-s}{a}.
	 \end{equation*}
	Since it is sufficient to prove (\ref{estimp}) for $p$ large enough,  we pick any $p'$ such that
	 \begin{equation}\label{e20}
	 	1 < p' < \min\abra{\frac{n+2}{2}, \frac{2}{n+2} - \frac{b-s}{a}}.
	 \end{equation}
\medskip

	 Let $\theta \in L^{p'}(Q_T)$ with $\theta \geq 0$ and $\|\theta\|_{L^{p'}(Q_T)} \leq 1$. Let $\phi$ be the unique nonnegative solution to
the equation
	 \begin{equation}\label{e21}
	 \left\{
	 \begin{aligned}
	 	\pa_t \phi + \Delta \phi &= -\theta, &&(x,t)\in Q_T,\\
	 	\na\phi \cdot \nu &= 0, &&(x,t)\in \pa\Omega\times(0,T),\\
	 	\phi(x,T) &= 0, &&x\in\Omega.
	 \end{aligned}
	 \right.
	 \end{equation}
	 To avoid any possible confusion, we remark that \eqref{e21} is a forward heat equation for the function $\psi(x,s)=\phi(x,T-s)$, with respect to the new time variable $s=T-t$. From maximal regularity in \cite{ladyvzenskaja1988linear} and Lemma \ref{embedding},
we know that there exists $C_{p'}>0$ (depending on $\Omega$, $T$, $n$ and $p$ only) such that
	 \begin{equation}\label{e22}
	 	\|\pa_t\phi\|_{L^{p'}(Q_T)} + \|\phi\|_{L^{q_2}(Q_T)} + \|\na \phi\|_{L^{q_3}(Q_T)} \leq C_{p'} , 
	 \end{equation}
	 where
	 \begin{equation}\label{q2q3}
		 q_2 = \frac{(n+2)p'}{n+2-2p'} \quad \text{ and } \quad q_3 = \frac{(n+2)p'}{n+2-p'}.
	 \end{equation}
	 Set $p = \frac{p'}{p'-1}$. Then, by using the equation satisfied by $m$ in \eqref{e11_r} and integrating by parts, we have
	 \begin{equation}\label{e23}
	 \begin{aligned}
		 \int_0^T\int_{\Omega}m\theta dxdt&= \int_0^T\int_{\Omega} m (-\pa_t\phi - \Delta \phi)dxdt\\
		 &= \lambda\int_{\Omega}m_0\phi(0)dx + \int_0^T\int_{\Omega}\phi(\pa_t m - \Delta m)dxdt\\
		 &\leq \int_{\Omega}m_0\phi(0)dx + \lambda\int_0^T\int_{\Omega} \phi m dxdt + \chi\int_0^T\int_{\Omega}f_r(m)\na \phi \na c dxdt\\
		 &\leq \int_{\Omega}m_0\phi(0)dx + (I) + (II) ,
	 \end{aligned}
	 \end{equation}
	 where
	 \begin{equation*}
	 	(I) = \int_0^T\int_{\Omega}\phi m dxdt \quad \text{ and } \quad (II) = \chi\int_0^T\int_{\Omega}|f_r(m)||\na \phi||\na c|dxdt.
	 \end{equation*}
	 For the first term on the right-hand side of \eqref{e23}, we use
	 \begin{equation*}
		 \|\phi(0)\|_{L^{p'}(\Omega)}^{p'} = \int_{\Omega}\left|\int_0^T\pa_t\phi dt\right|^{p'}dx \leq T^{\frac{1}{p-1}}\|\pa_t\phi\|_{L^{p'}(Q_T)}^{p'} \leq T^{\frac{1}{p-1}}C_{p'}^{p'}
	 \end{equation*}
	 thanks to \eqref{e22}, in order to estimate
	 \begin{equation}\label{add}
		 \int_{\Omega}m_0\phi(0)dx \leq \|m_0\|_{L^p(\Omega)}\|\phi(0)\|_{L^{p'}(\Omega)} \leq \|m_0\|_{L^p(\Omega)}T^{\frac 1p}C_{p'}.
	 \end{equation}
 For the treatment of  $(I)$, we consider two cases.
	 \begin{itemize}
	 \item {\bf Case 1.} $a\geq \frac{n+2}{2}$. Note that 
	 \begin{equation*}
	 	a \geq \frac{n+2}{2} \Longrightarrow a > \frac{(n+2)p'}{(n+2)(p'-1) + 2p'} \Longrightarrow \frac{a}{a-1} < \frac{(n+2)p'}{n+2-2p'} = q_2.
	 \end{equation*}
	 Therefore, by using H\"older's inequality, we have (for some $C_T$ depending only on $\Omega$, $T$, $a$, $n$ and $p$)
	 \begin{equation}\label{e24}
		 (I) \leq \|\phi\|_{L^{\frac{a}{a-1}}(Q_T)}\|m\|_{L^a(Q_T)} \leq C_T\, \|\phi\|_{L^{q_2}(Q_T)}\|m\|_{L^{a}(Q_T)} \leq C_T\,C_{p'}\, [K_1\,(T+1)]^{1/a},
	 \end{equation}
	 thanks to \eqref{e15} and \eqref{e22}. 
	 
	 \item {\bf Case 2.} $a< \frac{n+2}{2}$. Define $\vartheta = 1 - \frac{2a}{n+2}$ and note that $0<\vartheta<1$. From H\"older's inequality, there exists $R_{T,|\Omega|}>0$ (depending only on $\Omega$, $T$, $a$, $n$ and $p$) such that
	 \begin{equation}\label{e25}
	 \begin{aligned}
		 (I) &=  \int_0^T\int_{\Omega} \phi m^{1-\vartheta}m^{\vartheta}dxdt\\
		 &\leq R_{T,|\Omega|}\bra{\int_0^T\int_{\Omega}\phi^{p'}m^{p'(1-\vartheta)}}^{\frac{1}{p'}}\|m\|_{L^p(Q_T)}^{\vartheta}\\
		 &\leq R_{T,|\Omega|} \, \|\phi\|_{L^{\frac{(n+2)p'}{n+2-2p'}}(Q_T)}\bra{\int_0^T\int_{\Omega}m^{\frac{(n+2)(1-\vartheta)}{2}}dxdt}^{\frac{2}{n+2}}\|m\|_{L^p(Q_T)}^{\vartheta}\\
		 &= R_{T,|\Omega|} \, \|\phi\|_{L^{q_2}(Q_T)}\|m\|_{L^a(Q_T)}^{(1-\vartheta)}\|m\|_{L^p(Q_T)}^{\vartheta},
	 \end{aligned}
	 \end{equation}
	 using 
	 \begin{equation}\label{vartheta}
		 q_2 = \frac{(n+2)p'}{n+2-2p'} \quad \text{ and } \quad \frac{(n+2)(1-\vartheta)}{2} = a.
	 \end{equation}
	 Therefore,
	 \begin{equation}\label{e26}
		 (I) \leq R_{T,\Omega} \, C_{p'}\sbra{K_1(T+1)}^{\frac{1-\vartheta}{a}}\|m\|_{L^p(Q_T)}^{\vartheta} .
	 \end{equation}
	\end{itemize}
	 From these two cases, or more precisely, from \eqref{e24} and \eqref{e26}, we have
	 \begin{equation}\label{estimate_I}
	 	(I) \leq C_T^*\,\bra{1+\|m\|_{L^p(Q_T)}^{\vartheta}}
	 \end{equation}
	 for some $0<\vartheta<1$ (depending on $a,n$), and $C_T^*$ (depending on $\Omega$, $T$, $n$, $p$, $a$, $\beta$, $\delta$, $\|m_0\|_{L^1(\Omega)}$, $\|c_0\|_{L^1(\Omega)}$, $\|d_0\|_{L^{\infty}(\Omega)}$,  but not depending on $\lambda$).
\medskip

 In order to estimate $(II)$, we write
	 \begin{equation}\label{e27}
	 \begin{aligned}
		 (II) &= \chi\int_0^T\int_{\Omega}|f_r(m)||\na \phi||\na c|dxdt\\
		 &\le \chi \gamma \int_0^T\int_{\Omega}|m|^b|\na \phi||\na c|dxdt\\
		 &\leq \chi \gamma \bra{\int_0^T\int_{\Omega}\bra{m^{b-s}|\na \phi| |\na c|}^{p'}dxdt}^{\frac{1}{p'}}\|m\|_{L^p(Q_T)}^{s}\\
		 &= \chi \gamma \bra{\int_0^T\int_{\Omega}m^{p'(b-s)}|\na \phi|^{p'} |\na c|^{p'}dxdt}^{\frac{1}{p'}}\|m\|_{L^p(Q_T)}^{s}.
	 \end{aligned}
	 \end{equation}
	 Note that $1<p'<n+2$ (see \eqref{e20}). We can use H\"older's inequality to estimate further
	 \begin{equation*}
	 \begin{aligned}
	 	(II) &\leq \chi\gamma \, \|\na \phi\|_{L^{\frac{(n+2)p'}{n+2-p'}}(Q_T)}\bra{\int_0^T\int_{\Omega}m^{(n+2)(b-s)}|\na c|^{n+2}dxdt}^{\frac{1}{n+2}}\|m\|_{L^p(Q_T)}^s\\
	 	&\le \chi\gamma \,  \|\na \phi\|_{L^{q_3}(Q_T)}\|m\|_{L^a(Q_T)}^{b-s}\|\na c\|_{L^{q_4}(Q_T)}\|m\|_{L^p(Q_T)}^s,
	 \end{aligned}	
	 \end{equation*}
	 where
	 \begin{equation}\label{q4}
		 q_4 = \frac{(n+2)a}{a-(n+2)(b-s)}
	 \end{equation}
	 is well defined since $(n+2)(b-s) < a$ (see \eqref{e3}). Therefore, we can apply \eqref{e22}, \eqref{e15} in order to get 
	 \begin{equation}\label{e28}
		 (II) \leq \chi \gamma \, C_{p'}\, \sbra{K_1(T+1)}^{\frac{b-s}{a}}\|\na c\|_{L^{q_4}(Q_T)}\|m\|_{L^p(Q_T)}^s.
	 \end{equation}
	 Note that from \eqref{q4} we see that when $a<n+2$,
	 \begin{equation}\label{q4<q1}
		 q_4 = \frac{(n+2)a}{a-(n+2)(b-s)}<\frac{(n+2)a}{n+2-a} = q_1,
	 \end{equation}
	 and that when $a\geq n+2$, $q_4<q_1$ since $q_1<+\infty$ can be chosen arbitrarily. Therefore (for some $C_T>0$ depending only on $\Omega$, $T$, $a$, $b$, $n$),
	 \begin{equation*}
		 \|\na c\|_{L^{q_4}(Q_T)} \leq C_T\|\na c\|_{L^{q_1}(Q_T)} \leq C_T\, K_2\, K_1\,(T+1),
	 \end{equation*}
	 thanks to \eqref{e16}. It then follows from \eqref{e28} that
	 \begin{equation}\label{estimate_II}
		 (II) \leq D_T^*\, \|m\|_{L^p(Q_T)}^s,
	 \end{equation}
	 where $D_T^*>0$ only depends on $\Omega$, $T$, $n$, $p$, $a$, $b$, $\beta$, $\delta$, $\var_0$, $\chi$, $\gamma$, $\|m_0\|_{L^1(\Omega)}$, $\|c_0\|_{W^{\tilde p, (2-2/\tilde{p})}(\Omega)}$, $\|d_0\|_{L^{\infty}(\Omega)}$,  but does not depend on $\lambda$.
	 \medskip

	 From \eqref{e23}, \eqref{add}, \eqref{estimate_I} and \eqref{estimate_II}, we get (for $E^* := C_{p'} T^{1/p} + C_T^* + D_T^*$) 
	 \begin{equation*}
		 \int_0^T\int_{\Omega}m\theta dxdt \leq E^*\, \sbra{\|m_0\|_{L^p(\Omega)} + 1 + \|m\|_{L^p(Q_T)}^{\vartheta} + \|m\|_{L^p(Q_T)}^s}
	 \end{equation*}
	 for all $0\leq \theta \in L^{p'}(Q_T)$ satisfying $\|\theta\|_{L^{p'}(Q_T)} = 1$. 
Therefore, by duality, we get
	 \begin{equation*}
		 \|m\|_{L^{p}(Q_T)} \leq E^*\,\sbra{\|m_0\|_{L^p(\Omega)} + 1 + \|m\|_{L^p(Q_T)}^{\vartheta} + \|m\|_{L^p(Q_T)}^s}.
	 \end{equation*}
	 Note that $0 < \vartheta, s < 1$. Thus we can use Young's inequality in order to get
	 \begin{equation}\label{e32}
		 \|m\|_{L^{p}(Q_T)}\leq F_p^*\, \bra{1+\|m_0\|_{L^p(\Omega)}},
	 \end{equation}
where $F_p^*$ depends on the same quantities as $D_T^*$.
\par 
	 From \eqref{e20}, we recall from $p=\frac{p'}{p'-1}$ that $1<p<\infty$ can be chosen arbitrarily. We also insist that the bound in \eqref{e32} does not depend on $\lambda \in (0,1]$. 
	 \end{proof}
 	
 We now use Lemma \ref{lem:m_Lp} in order to conclude the proof of Prop. \ref{prop11}.
\medskip

	 Since $\tilde p>n+2$, Lemma \ref{embedding}) guarantees $W^{2,1}_{\tilde p}(Q_T)\hookrightarrow \Y$ compactly.  Applying this to the equation satisfied by $c$ in \eqref{e11_r}, we get (for some constant $C_T>0$ depending only on $\Omega$, $T$, $n$, $p$  and $\var_0$) 
	 \begin{equation}\label{add1}
		 \|c\|_{L^{\infty}(Q_T)} 
 \leq C_T\, \bigg( \|\lambda(\delta d + \beta m)\|_{L^{\tilde p}(Q_T)}  + \|c_0\|_{W^{\tilde p,(2-2/\tilde{p})}(\Omega)} \bigg) \leq \S_T,
	 \end{equation}
for some $\S_T>0$ (depending only on the same quantities as $D_T^*$, except $\tilde p$).
\medskip

 Applying the heat semigroup property and maximal regularity, we can find $\S_T^*$
 (depending on the same quantities as $D_T^*$) such that
 \begin{equation}\label{add1bis}
\| \na c\|_{L^{\infty}(Q_T)}   + \|c\|_{W^{2,1}_{\tilde p}(Q_T)} \le \S_T^*,
\end{equation}
and therefore
\begin{equation}\label{add0}
	\|c\|_{\Y} 
\leq S_T + \S_T^*.
\end{equation}

	We now consider the equation satisfied by $m$ in \eqref{e11_r}: 
	 \begin{equation}\label{add2}
	 \begin{aligned}
		 \pa_t m
		 &= \Delta m + \lambda m(1-m^{a-1}) - \chi \na\cdot(f_r(m)\na c)\\
		 &= \Delta m -\chi f_r'(m)\na c\cdot \na m + \lambda m(1-m^{a-1}) - \chi f_r(m)\Delta c.
	 \end{aligned}
	 \end{equation}
	 From \eqref{e32} with $p=\tilde p$, \eqref{add0}, and assumption \eqref{assumption_f}, we apply the properties of regularity for parabolic equations (cf \cite{ladyvzenskaja1988linear}) to \eqref{add2}, and get
	 \begin{equation*}
	 	\|m\|_{W^{2,1}_{\tilde p}(Q_T)} \leq C_T\left(\left\|\lambda m(1-m^{a-1}) - \chi f_r(m)\Delta c\right\|_{L^{\tilde p}(Q_T)}+\|m_0\|_{W^{\tilde p,(2-2/\tilde{p})}(\Omega)}\right) \leq \S_T^{**},
	 \end{equation*}
 for some $\S_T^{**}>0$ depending on the same parameters as $\S_T$. By the embedding \eqref{em}
	 \begin{equation}\label{add3}
	 	\|m\|_{\Y} \leq C_T\|m\|_{W^{2,1}_{\tilde p}(Q_T)} \leq C_T\, \S_T^{**}.
	 \end{equation}
	 Note that the bounds \eqref{add0} and \eqref{add3} do not depend on $\lambda \in (0,1]$ (or $(m,c)$). This means that the set $\Z$ defined in \eqref{def_Z} is bounded in $\X=\Y\times\Y$. This shows
that the last assumption of Leray-Schauder fixed point theorem holds, and  therefore that the mapping $\F_r$ has a fixed point, which satisfies as a
consequence system (\ref{e11_r}), together with estimates (\ref{add0}), (\ref{add3}). Since moreover the upper bound $\S_T$ in 
 \eqref{add3} does not depend on  $r$, by taking $r\geq \S_T$, we obtain a solution to the original system \eqref{sys_original} (for all $T>0$). This solution is strong in the sense that all terms appearing in the system are defined a.e., (as $L^1(Q_T)$ functions).

\subsection{Uniqueness} \label{uniqueness}	
\ 

\medskip
	 
In this subsection, we write down a stability result for strong solutions of system (\ref{sys_original}), which entails the uniqueness result in Theorem \ref{thm:main}. More precisely, we write the

\begin{proposition} \label{prop12}
Let $T>0$ and let $\Omega$ be a smooth ($C^{2+ \alpha}$ for some $\alpha>0$) bounded connected open set of $\R^n$ (for $n \in \N - \{0\}$). In addition, assume $a>1$, $\chi, \eps_0, \delta, \beta > 0$, $\gamma, b, l \ge 0$, and $f$ and $g$ satisfying (F) and (G). We consider two sets of nonnegative (for each component), initial data $(m_{01}, c_{01}, d_{01})$ and $(m_{02}, c_{02}, d_{02})$ in $W^{(2-2/{\tilde p}),\tilde p}(\Omega)\times W^{(2-2/{\tilde p}),\tilde p}(\Omega) \times L^\infty(\Omega)$ for some $\tilde p>\max(n+2,a)$, and two sets of nonnegative (for each component) strong (in the sense of Theorem \ref{thm:main}) solutions $(m_{1}, c_{1}, d_{1})$ and $(m_{2}, c_{2}, d_{2})$ to  system \eqref{sys_original} (with corresponding initial data) on $Q_T$, satisfying estimate (\ref{pourun}). We denote $\mu := \|d_2\|_{L^{\infty}(Q_T)}$, $\mu_c :=  \|\na c_1\|_{L^{\infty}(Q_T)}$, $\mu_m := \max( \|m_1\|_{L^{\infty}(Q_T)}, \|m_2\|_{L^{\infty}(Q_T)})$, $\mu_f := \|f\|_{L^{\infty}([0, \mu_m])}$,
$\mu_{f'} := \|f'\|_{L^{\infty}([0, \mu_m])}$, $\mu_{g'} := \|g'\|_{L^{\infty}([0, \mu_m])}$.
\par 
Then, for all $t \in [0,T]$, 
\begin{equation}\label{ee}
 \int_{\Omega} |m_1(\cdot, t) - m_2(\cdot, t)|^2dx + \frac{\chi^2\, \mu_f^2}{2\var_0} \int_{\Omega} |c_1(\cdot, t) - c_2(\cdot, t)|^2dx + \int_{\Omega} |d_1(\cdot, t) - d_2(\cdot, t)|^2dx 
\end{equation}
$$ \le  e^{G\,t} \,\, \bigg(  \int_{\Omega} |m_{10} - m_{20}|^2dx + \frac{\chi^2\, \mu_f^2}{2\var_0} \int_{\Omega} |c_{10} - c_{20}|^2dx + \int_{\Omega} |d_{10} - d_{20}|^2dx \bigg), $$
where
$$ G := \max\bigg( 2 + \chi^2\,\mu_{f'}^2 \, \mu_c^2 + \frac{\chi^2\,\beta\,\mu_f^2}{2\var_0} + \mu_{g'}\,(1+\mu) ;  \beta + \delta ; \frac{\chi^2\,\delta\,\mu_f^2}{2\var_0} + (1+\mu)  \bigg) . $$
\end{proposition}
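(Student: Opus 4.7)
The strategy is to derive a Gronwall-type inequality for the weighted quadratic energy
$$ \mathcal{E}(t) := \norm{M(\cdot,t)}_{L^2(\Omega)}^2 + \frac{\chi^2 \mu_f^2}{2\var_0} \norm{C(\cdot,t)}_{L^2(\Omega)}^2 + \norm{D(\cdot,t)}_{L^2(\Omega)}^2, $$
where $M := m_1 - m_2$, $C := c_1 - c_2$, $D := d_1 - d_2$. The weight $\chi^2 \mu_f^2/(2\var_0)$ is chosen precisely so that the $\norm{\na C}_{L^2(\Omega)}^2$ contribution produced by testing the chemotaxis drift in the $m$-equation is exactly cancelled by the diffusive dissipation coming from testing the $c$-equation.

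First, I would test the $M$-equation against $M$ and integrate by parts. The diffusion gives $-\norm{\na M}_{L^2(\Omega)}^2$. For the reaction, the decomposition $m_1(1-m_1^{a-1}) - m_2(1-m_2^{a-1}) = M - (m_1^a - m_2^a)$ together with the monotonicity $(m_1^a - m_2^a)M \ge 0$ (valid since $a>1$) makes the net reaction contribution at most $\norm{M}_{L^2(\Omega)}^2$. For the chemotaxis term, splitting $f(m_1)\na c_1 - f(m_2)\na c_2 = (f(m_1)-f(m_2))\na c_1 + f(m_2)\na C$ and using $|f(m_1)-f(m_2)|\le \mu_{f'}|M|$, $|\na c_1|\le \mu_c$, $|f(m_2)|\le \mu_f$, followed by Young's inequality with parameter $1/2$, fully absorbs $\norm{\na M}_{L^2(\Omega)}^2$ on the left while producing a contribution $\tfrac{\chi^2\mu_{f'}^2\mu_c^2}{2}\norm{M}_{L^2(\Omega)}^2 + \tfrac{\chi^2\mu_f^2}{2}\norm{\na C}_{L^2(\Omega)}^2$ on the right.

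Next, I would test the $C$-equation against $C$, obtaining
$$ \tfrac12\tfrac{d}{dt}\norm{C}_{L^2(\Omega)}^2 = -\var_0\norm{\na C}_{L^2(\Omega)}^2 - \norm{C}_{L^2(\Omega)}^2 + \delta \int_\Omega DC + \beta \int_\Omega MC, $$
and multiply the result by $\chi^2\mu_f^2/\var_0$ so that the diffusion contributes exactly $-\chi^2\mu_f^2\norm{\na C}_{L^2(\Omega)}^2$, cancelling the gradient term from the previous step. The cross terms $\int DC$ and $\int MC$ are controlled by Young's inequality. For the $D$-equation, I would use the identity
$$ g(m_1)(1-d_1) - g(m_2)(1-d_2) = -g(m_1)D + (g(m_1)-g(m_2))(1-d_2), $$
discard $-g(m_1)D^2 \le 0$ after multiplication by $D$ (since $g\ge 0$), and bound the remainder using $|g(m_1)-g(m_2)|\le \mu_{g'}|M|$ and $|1-d_2|\le 1+\mu$.

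Summing the three tested identities, the $\norm{\na C}_{L^2(\Omega)}^2$ terms cancel by construction. Collecting the coefficients in front of $\norm{M}_{L^2(\Omega)}^2$, $\tfrac{\chi^2\mu_f^2}{2\var_0}\norm{C}_{L^2(\Omega)}^2$, and $\norm{D}_{L^2(\Omega)}^2$ respectively (each normalised so as to pair with the corresponding piece of $\mathcal{E}$) yields $\mathcal{E}'(t) \le G\,\mathcal{E}(t)$ with $G$ equal to the maximum in the statement. Gronwall's lemma then delivers \eqref{ee}. The main technical obstacle is balancing the Young-inequality constants so that $\norm{\na M}_{L^2(\Omega)}^2$ is fully absorbed on the left \emph{and} the residual coefficient of $\norm{\na C}_{L^2(\Omega)}^2$ matches the diffusive dissipation scaled by the $C$-weight in $\mathcal{E}$; once the split is made correctly, every remaining step is bookkeeping.
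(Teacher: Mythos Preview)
Your proposal is correct and follows essentially the same approach as the paper: both derive $L^2$ energy identities for $M$, $C$, $D$ by testing each difference equation against itself, use the same splitting $f(m_1)\nabla c_1 - f(m_2)\nabla c_2 = (f(m_1)-f(m_2))\nabla c_1 + f(m_2)\nabla C$, absorb $\|\nabla M\|^2$ via Young's inequality, and choose the weight $\chi^2\mu_f^2/(2\var_0)$ on $\|C\|^2$ precisely so that the $c$-diffusion cancels the leftover $\|\nabla C\|^2$ term before applying Gronwall. The only (harmless) imprecision is a factor of two in your narrative: the $M$-step in $\tfrac{d}{dt}$ form produces $\chi^2\mu_f^2\|\nabla C\|^2$ rather than $\tfrac{\chi^2\mu_f^2}{2}\|\nabla C\|^2$, which is exactly what your multiplier on the $C$-equation cancels.
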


\begin{proof}
Substracting the equations satisfied by $m_1$ and $m_2$, $c_1$ and $c_2$, and finally $d_1$ and $d_2$, and then performing integrations by parts, we end up with the identities:
\begin{equation}\label{eem}
	\begin{aligned}
&\frac12 \frac{d}{dt} \int_{\Omega} |m_1 - m_2|^2dx + \int_{\Omega} |\nabla(m_1 - m_2)|^2dx + \int_{\Omega} (m_1^a - m_2^a)\,(m_1 - m_2)dx\\
&=  \int_{\Omega} |m_1 - m_2|^2dx + \chi \int_{\Omega} f(m_2)\, \nabla(m_1 - m_2)\cdot \nabla(c_1 - c_2)dx
\\&\quad + \chi \int_{\Omega}(f(m_1) -  f(m_2))\, \nabla(m_1 - m_2)\cdot \nabla c_1dx ,
\end{aligned}
\end{equation}
  \begin{equation}\label{eec}
  	\begin{aligned}
&\frac12 \frac{d}{dt} \int_{\Omega} |c_1 - c_2|^2dx + \var_0 \int_{\Omega} |\nabla(c_1 - c_2)|^2dx + \int_{\Omega} |c_1 - c_2|^2dx\\
&=\delta \int_{\Omega} (c_1 - c_2)\,(d_1 -  d_2)dx 
+ \beta \int_{\Omega} (c_1 - c_2)\,(m_1 -  m_2)dx  ,
\end{aligned}
\end{equation}
 \begin{equation}\label{eed}
 	\begin{aligned}
&\frac12 \frac{d}{dt} \int_{\Omega} |d_1 - d_2|^2dx   + \int_{\Omega} g(m_1)\, |d_1 - d_2|^2dx\\
& = -  \int_{\Omega} (1 - d_2)\,(d_1 -  d_2)\,(g(m_1) - g(m_2))dx.
\end{aligned}
\end{equation} 

Then we perform the following estimates (the last one uses Young's inequality):
 \begin{equation}\label{eede}
\frac{d}{dt} \int_{\Omega} |d_1 - d_2|^2dx   \le (1 +\mu)\,  \int_{\Omega} |d_1 -  d_2|^2dx + (1 +\mu)\, \mu_{g'}^2  \int_{\Omega} |m_1 - m_2|^2 dx ,
\end{equation} 
 \begin{equation}\label{eece}
 \begin{aligned}
\frac{d}{dt} \int_{\Omega} |c_1 - c_2|^2dx + 2\var_0 \int_{\Omega} |\nabla(c_1 - c_2)|^2dx\\
\le \delta\,  \int_{\Omega} |d_1 -  d_2|^2dx + \beta \int_{\Omega} |m_1 - m_2|^2dx + (\beta + \delta) \int_{\Omega} |c_1 - c_2|^2dx,  
\end{aligned}
\end{equation}
\begin{equation}\label{eeme}
 \frac{d}{dt} \int_{\Omega} |m_1 - m_2|^2dx  \le (2 + \chi^2\,\mu_{f'}^2 \mu_c^2)  \int_{\Omega} |m_1 - m_2|^2dx + \chi^2  \mu_f^2 \int_{\Omega} |\nabla(c_1 - c_2)|^2dx . 
\end{equation}
Estimate (\ref{ee}) is directly obtained from these estimates by an application of Gronwall's lemma to the quantity
$$  \int_{\Omega} |m_1 - m_2|^2dx  +  \frac{\chi^2\, \mu_f^2}{2\var_0}  \int_{\Omega} |c_1 - c_2|^2dx + \int_{\Omega} |d_1 - d_2|^2dx .$$
\medskip

Note that uniqueness in Theorem \ref{thm:main} is a direct consequence of Proposition \ref{prop12}. Moreover, since $T$ is arbitrary in Propositions \ref{prop11} and \ref{prop12}, one can build the unique solution of system (\ref{sys_original}) on $\R_+$ that appears in the conclusion of Theorem \ref{thm:main} by patching together the solutions defined on finite time intervals. 
\end{proof}

 \subsection{Uniform-in-time bounds}\label{sec:uniform}
\

\medskip

 In this section, we will conclude the proof of Theorem \ref{thm:main} by showing that the unique solution to \eqref{sys_original} obtained  in Propositions \ref{prop11} and \ref{prop12} is globally (w.r.t. time) bounded.  Thanks to Proposition \ref{prop11}, it is sufficient to show that  the sequences $\left\{\|m\|_{L^\infty(Q_{\tau,\tau+1})}\right\}_{\tau=1}^\infty$ and $\left\{\|c\|_{L^\infty(Q_{\tau,\tau+1})}\right\}_{\tau=1}^\infty$ are bounded.
As a consequence, we will obtain estimates in this subsection in which all constants {\em do not depend on  $\tau$}. In addition, we will see that the $\limsup$ of each sequence is independent of $m_0$ and $c_0$.
 \medskip 
 
 We first recall the uniform in time bound
 \begin{equation*}
	\sup_{t \in \R_+} \|d(\cdot, t)\|_{L^\infty(\Omega)} \leq \mu :=  \max\abra{1, \|d_0\|_{L^\infty(\Omega)}}.
 \end{equation*}
 Coming back to the proof of Lemma \ref{l21}, and specializing it in the case when $\lambda =1$, we see that
$$ \frac{d}{dt} \int_{\Omega} (m+c) \le |\Omega|\,(\delta\,\mu + k_a) - \int_{\Omega} (m+c), $$
so that
 \begin{equation}\label{La-local}
	 \sup_{t\in \R_+}\bra{\|m(t)\|_{L^1(\Omega)} + \|c(t)\|_{L^1(\Omega)}} \leq K_1^*,
 \end{equation}
where $K_1^* := \max(  |\Omega|\,(\delta\,\mu + k_a) , \|m_0\|_{L^1(\Omega)} + \|c_0\|_{L^1(\Omega)})$, and
 \begin{equation}\label{La-largetime}
	 \limsup_{t\to \infty}\bra{\|m(t)\|_{L^1(\Omega)} + \|c(t)\|_{L^1(\Omega)}} \leq  |\Omega|\,(\delta\,\mu + k_a).
 \end{equation}
We also recall that 
$k_a$ and $\mu$ only depend on $\beta, a$ and $\|d_0\|_{L^{\infty}(\Omega)}$. 
\medskip

Then, we prove the

 \begin{lemma}\label{uniform_m_La}
Under the assumptions of Theorem \ref{thm:main}, we consider
  $(m,c,d)$, the unique solution to \eqref{sys_original} on $\Omega \times \R_+$. Then
 	\begin{equation}\label{m_La}
	 	\sup_{\tau \in \mathbb N}\|m\|_{L^a(Q_{\tau,\tau+2})} \leq (3\,K_1^*)^{\frac 1a},
 	\end{equation}
and
\begin{equation}\label{eventual-m_La}
	 	\limsup_{\tau \to \infty}\|m\|_{L^a(Q_{\tau,\tau+2})} \leq (3\,|\Omega|\,(\delta\,\mu + k_a))^{\frac 1a}.
 	\end{equation}
 \end{lemma}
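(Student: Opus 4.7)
The plan is to repeat the argument from Lemma \ref{l21} verbatim on time windows of length two, $[\tau, \tau+2]$, using the uniform-in-time $L^1$ bounds \eqref{La-local} and \eqref{La-largetime} as the input. Since uniqueness has already been established in Section \ref{uniqueness}, we may work directly with the solution of \eqref{sys_original} (the case $\lambda = 1$ of \eqref{e11_r}), for which $c \geq 0$, $m \geq 0$, $d \geq 0$.

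First, I would integrate the $m$-equation of \eqref{sys_original} in space. The divergence of the chemotaxis flux $-\chi \nabla \cdot (f(m) \nabla c)$ vanishes upon integration thanks to the no-flux boundary condition $\nabla c \cdot \nu = 0$, and the Laplacian term vanishes by $\nabla m \cdot \nu = 0$. This gives the identity
\begin{equation*}
\frac{d}{dt} \int_{\Omega} m(x,t) \, dx = \int_{\Omega} m(x,t) \, dx - \int_{\Omega} m(x,t)^a \, dx .
\end{equation*}
Integrating in time over $[\tau, \tau+2]$ and rearranging yields the key identity
\begin{equation*}
\|m\|_{L^a(Q_{\tau,\tau+2})}^a = \int_{\tau}^{\tau+2} \|m(\cdot, t)\|_{L^1(\Omega)} \, dt + \|m(\cdot,\tau)\|_{L^1(\Omega)} - \|m(\cdot, \tau+2)\|_{L^1(\Omega)} .
\end{equation*}

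Now I would simply drop the nonnegative term $\|m(\cdot, \tau+2)\|_{L^1(\Omega)}$ and apply \eqref{La-local} to each of the two remaining terms on the right-hand side, obtaining
\begin{equation*}
\|m\|_{L^a(Q_{\tau, \tau+2})}^a \leq 2 K_1^* + K_1^* = 3 K_1^* ,
\end{equation*}
uniformly in $\tau \in \mathbb{N}$, which is \eqref{m_La}.

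For \eqref{eventual-m_La}, I would fix $\epsilon > 0$ arbitrary and use \eqref{La-largetime} to pick $\tau_0$ so that $\|m(\cdot,t)\|_{L^1(\Omega)} \leq |\Omega|(\delta\mu + k_a) + \epsilon$ for all $t \geq \tau_0$. Applying the same identity on $[\tau, \tau+2]$ with $\tau \geq \tau_0$ and dropping the negative term gives $\|m\|_{L^a(Q_{\tau, \tau+2})}^a \leq 3(|\Omega|(\delta\mu + k_a) + \epsilon)$, and letting $\tau \to \infty$ then $\epsilon \to 0^+$ produces the announced $\limsup$ bound. There is no genuine obstacle here: the argument is entirely a localization of Lemma \ref{l21}, and the only point worth noting is that the logistic absorption term $-m^a$ in the mass balance directly provides the $L^a(Q_{\tau,\tau+2})$ control from the $L^1$ bound, with no dependence on $m_0$ or $c_0$ in the large-time estimate because only the asymptotic $L^1$ bound \eqref{La-largetime} enters.
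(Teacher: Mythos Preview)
Your proposal is correct and follows essentially the same approach as the paper: integrate the $m$-equation over $Q_{\tau,\tau+2}$, drop the nonnegative term $\|m(\cdot,\tau+2)\|_{L^1(\Omega)}$, and bound the remaining two terms via \eqref{La-local} (respectively \eqref{La-largetime}) to reach $3K_1^*$ (respectively $3|\Omega|(\delta\mu+k_a)$). Your write-up simply makes explicit the $\epsilon$-argument for the $\limsup$ that the paper leaves implicit.
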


 \begin{proof}
 	By integrating the equation satisfied by $m$ on $Q_{\tau,\tau+2}$, we have
 	\begin{equation*}
	 	\int_{\Omega}m(x,\tau+2)dx = \int_{\Omega}m(x,\tau)dx + \int_{\tau}^{\tau+2}\int_{\Omega}m(x,s)(1-m(x,s)^{a-1})dxds.
 	\end{equation*}
 	Thus
 	\begin{equation*}
	 	\int_{\tau}^{\tau+2}\int_{\Omega}m(x,s)^{a}\, dxds \leq \|m(\tau)\|_{L^1(\Omega)} + \int_{\tau}^{\tau+2}\int_{\Omega}m(x,s)dx ds,
 	\end{equation*}
 	which proves both \eqref{m_La} and \eqref{eventual-m_La}.
 \end{proof}

	\begin{lemma}\label{uniform_gradient_c_Lq1}
		Under the assumptions of Theorem \ref{thm:main}, we consider
  $(m,c,d)$, the unique solution to \eqref{sys_original} on $\Omega \times \R_+$.
Let $q_1$ be defined as in \eqref{q1}. Then, there exists $K_3>0$ (depending on $\Omega$,  $n$, $a$, $\var_0, \beta, \delta$ and $\|m_0\|_{L^{1}(\Omega)}$, $\|c_0\|_{L^{1}(\Omega)}$, $\|d_0\|_{L^{\infty}(\Omega)}$, but not on $\tau$) and $\tilde K_3>0$ (depending on $\Omega$,  $n$, $a$, $\var_0, \beta, \delta$ and $\|d_0\|_{L^{\infty}(\Omega)}$, but not on $\tau$) such that
		\begin{equation}\label{gradient_c_Lq1}
			\sup_{\tau\in\mathbb N - \{0\} }\|\na c\|_{L^{q_1}(Q_{\tau,\tau+1})} \leq K_3,
		\end{equation}
         and
         \begin{equation}\label{eventual-gradient_c_Lq1}
			\limsup_{\tau\to\infty }\|\na c\|_{L^{q_1}(Q_{\tau,\tau+1})} \leq \tilde K_3.
		\end{equation}
	\end{lemma}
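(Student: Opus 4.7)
The plan is to mirror the proof of Lemma \ref{l22}, replacing the fixed initial data at $t=0$ by a smooth time cutoff so that the resulting estimates have constants independent of $c_0$ and $m_0$. Fix $\zeta \in C^\infty(\R,[0,1])$ with $\zeta(t)=0$ for $t\le 0$ and $\zeta(t)=1$ for $t\ge 1$, and set $\zeta_\tau(t):=\zeta(t-\tau+1)$, so that $\zeta_\tau$ vanishes near $t=\tau-1$ and equals $1$ on $[\tau,\tau+1]$. The cutoff function $\bar c := \zeta_\tau c$ then satisfies the Neumann problem
\begin{equation*}
\pa_t \bar c - \var_0 \Delta \bar c + \bar c = \zeta_\tau'(t)\, c + \zeta_\tau(t)\bigl(\delta d + \beta m\bigr), \qquad \bar c(\cdot,\tau-1) = 0,
\end{equation*}
on the cylinder $Q_{\tau-1,\tau+1}$ of fixed length $2$. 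Because the initial data vanishes, maximal parabolic regularity gives, with a constant $C_a$ independent of $\tau$,
\begin{equation*}
\|\bar c\|_{W^{2,1}_a(Q_{\tau-1,\tau+1})} \le C_a \bigl( \|\zeta_\tau' c\|_{L^a(Q_{\tau-1,\tau+1})} + \|\zeta_\tau(\delta d + \beta m)\|_{L^a(Q_{\tau-1,\tau+1})}\bigr).
\end{equation*}
Applying Lemma \ref{embedding}(b) to $\bar c$ and restricting to $Q_{\tau,\tau+1}$ (where $\bar c \equiv c$) will then produce the desired bound \eqref{gradient_c_Lq1}.

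The $\delta d$ contribution is immediately bounded by $\delta\mu(2|\Omega|)^{1/a}$, and $\|\zeta_\tau \beta m\|_{L^a(Q_{\tau-1,\tau+1})}$ is controlled uniformly by Lemma \ref{uniform_m_La} applied with $\tau$ shifted by one. The main obstacle is therefore the extraneous term $\|\zeta_\tau' c\|_{L^a(Q_{\tau-1,\tau+1})}$, which requires a uniform-in-$\tau$ bound on $\|c\|_{L^a(Q_{\tau-1,\tau})}$. This I would obtain by a short bootstrap starting from the uniform $L^1$ bound \eqref{La-local}: iterating the same cutoff and maximal $L^{p_k}$ regularity argument with an increasing sequence $1 < p_0 < p_1 < \ldots$, where $p_{k+1}$ is the Sobolev exponent produced by Lemma \ref{embedding}(a) applied to the $W^{2,1}_{p_k}$ bound from the previous step; since each $p_{k+1}$ is strictly greater than $p_k$, the sequence reaches $a$ in finitely many steps. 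Alternatively, exploiting the dissipative term $-c$ in the equation, one may use Duhamel directly with the $L^p \to L^q$ smoothing estimates of the Neumann heat semigroup: the exponential factor $e^{-(t-s)}$ makes all time integrals converge uniformly in $t$, and iterating this smoothing raises the integrability of $c(t)$ from $L^1$ to any $L^p$.

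Finally, the $\limsup$ statement \eqref{eventual-gradient_c_Lq1} follows by rerunning the same argument with \eqref{La-largetime} and \eqref{eventual-m_La} in place of \eqref{La-local} and \eqref{m_La}. The resulting upper bounds depend only on $\mu=\max(1,\|d_0\|_{L^\infty(\Omega)})$ and on the problem parameters, producing a constant $\tilde K_3$ independent of $m_0$ and $c_0$, as required.
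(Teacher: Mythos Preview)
Your proposal is correct and structurally identical to the paper's proof: both localize via a smooth time cutoff to obtain a heat equation with zero initial data on a cylinder of fixed length, then invoke maximal $L^a$ regularity and Lemma~\ref{embedding}(b). The only substantive difference lies in how the extraneous term $\|\zeta_\tau' c\|_{L^a}$ is handled.

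You propose either a finite bootstrap $p_0 < p_1 < \cdots$ or a Duhamel argument exploiting the damping $-c$. The paper instead closes this in a single step: it applies the $L^a \to L^{q_5}$ smoothing of Lemma~\ref{embedding}(a), interpolates $\|c\|_{L^a} \le \|c\|_{L^1}^{1-\alpha}\|c\|_{L^{q_5}}^\alpha$ with $\alpha<1$, and obtains a sublinear recursion $\|c\|_{L^{q_5}(Q_{\tau+1,\tau+2})} \le C\|c\|_{L^{q_5}(Q_{\tau,\tau+2})}^\alpha + C$. It then closes this recursion by a short combinatorial trick: on the set $\Lambda$ of $\tau$ where the sequence is nondecreasing, the recursion becomes self-referential and yields a uniform bound; off $\Lambda$ the sequence is decreasing, so the bound propagates. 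One small caveat on your bootstrap: since maximal regularity fails at $p=1$, the very first step from the uniform $L^1$ bound to some $L^{p_0}$ with $p_0>1$ needs either your Duhamel alternative or the paper's interpolation device; after that the iteration proceeds as you describe. Your Duhamel route is perfectly clean on its own and arguably more transparent, while the paper's interpolation-plus-$\Lambda$ argument is shorter and avoids iterating.
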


	\begin{proof}
		We define a smooth cutoff function $\psi\in C^\infty(\R; [0,1])$ such that $\psi(s) = 0$ for $s\leq 0$, and $\psi(s) =1$ for $s\geq 1$. Moreover, we assume that $|\psi'(s)| \leq 2$ for all $s\in \R$. For any $\tau\in \mathbb N$, the shifted cutoff function is defined by $\psi_\tau(\cdot) := \psi(\cdot-\tau)$. By multiplying the equation satisfied by $c$ by $\psi_\tau$, we get
		\begin{equation}\label{eq_c_cutoff}
			\pa_t(\psi_\tau c) = \eps_0 \Delta(\psi_\tau c) + \psi_\tau' c - \psi_\tau c + \psi_\tau[\delta d + \beta m],
		\end{equation}
		and $(\psi_\tau c)(x,\tau) = 0$. 	
		By using the  semigroup properties of the heat equation (Lemma \ref{embedding}), we see that (for some $C>0$ depending only on $\Omega$, $n$, $a$, 
$\var_0$, but not $\tau$)
		\begin{equation}\label{f1}
			\begin{aligned}
			\|\psi_\tau c\|_{L^{q_5}(Q_{\tau,\tau+2})} \leq C\bra{\|\psi_\tau'c\|_{L^a(Q_{\tau,\tau+2})} + \|\psi_\tau c\|_{L^a(Q_{\tau,\tau+2})} + \|\psi_\tau(\delta d + \beta m)\|_{L^a(Q_{\tau,\tau+2})}} ,
			\end{aligned}
		\end{equation}
		where
		\begin{equation*}
			q_5 = \left\{
			\begin{aligned}
				&\frac{(n+2)a}{n+2-2a} &&\text{ if } a<\frac{n+2}{2},\\
				&<+\infty \text{ arbitrary} &&\text{ if } a = \frac{n+2}{2},\\
                   & = +\infty  &&\text{ if } a > \frac{n+2}{2}  .
			\end{aligned}
			\right.
		\end{equation*}
		In any case, we have $a < q_5$. By using the boundedness of $\psi_\tau'$ and $\psi_\tau$, and H\"older's inequality, we have, for 
 $\alpha \in (0,1)$ satisfying $\frac{1}{a} = \frac{1-\alpha}{1} + \frac{\alpha}{q_5}$,
		\begin{equation}\label{f2}
			\|\psi_\tau' c\|_{L^a(Q_{\tau,\tau+2})} \leq 2\|c\|_{L^a(Q_{\tau,\tau+2})} \leq 2\|c\|_{L^1(Q_{\tau,\tau+2})}^{1-\alpha}\|c\|_{L^{q_5}(Q_{\tau,\tau+2})}^{\alpha} \leq 2(2K_1^*)^{1-\alpha}\,\|c\|_{L^{q_5}(Q_{\tau,\tau+2})}^{\alpha},
		\end{equation}
		and similarly
		\begin{equation}\label{f3}
			\|\psi_\tau c\|_{L^a(Q_{\tau,\tau+2})} \leq (2K_1^*)^{1-\alpha}\,\|c\|_{L^{q_5}(Q_{\tau,\tau+2})}^{\alpha} .
		\end{equation}
  From the uniform boundedness of $d$ and \eqref{m_La}, we see that
		\begin{equation}\label{f4}
			\|\psi_\tau(\delta d+\beta m)\|_{L^a(Q_{\tau,\tau+2})} \leq (3 K_1^*)^{\frac 1a}\beta + \delta\,\mu\,(2|\Omega|)^{1/a}.
		\end{equation}
		Inserting \eqref{f2}--\eqref{f4} into \eqref{f1} and using $\psi_{\tau}\equiv 1$ on $(\tau+1,\tau+2)$ yields, for some constant 
 $C_2>0$ (depending only on $\Omega$, $n$, $\delta$, $\beta$, $a$, $\var_0$, $\|m_0\|_{L^{1}(\Omega)}$, $\|c_0\|_{L^{1}(\Omega)}$, 
$\|d_0\|_{L^{\infty}(\Omega)}$, but not $\tau$)
		\begin{equation}\label{f5}
			\|c\|_{L^{q_5}(Q_{\tau+1,\tau+2})} \leq C_2\,\|c\|_{L^{q_5}(Q_{\tau,\tau+2})}^{\alpha} + C_2.
		\end{equation}
		Denote by $\Lambda:= \abra{\tau+1 \in \mathbb N\;:\;\|c\|_{L^{q_5}(Q_{\tau,\tau +1})} \leq \|c\|_{L^{q_5}(Q_{\tau +1,\tau+2})}}$. Then for all $\tau \in \Lambda$, we get from \eqref{f5} that
		\begin{equation*}
			\|c\|_{L^{q_5}(Q_{\tau+1,\tau+2})} \leq 2C_2\,\|c\|_{L^{q_5}(Q_{\tau+1,\tau+2})}^{\alpha} + C_2 \leq \frac 12\|c\|_{L^{q_5}(Q_{\tau+1,\tau+2})} + C_3,
		\end{equation*}
 where $C_3>0$ only depends on the same parameters as $C_2$.
		Thus
		\begin{equation*}
			\|c\|_{L^{q_5}(Q_{\tau+1,\tau+2})} \leq 2C_3 \quad \text{ for all } \quad \tau \in \Lambda.
		\end{equation*}
		By the definition of $\Lambda$, this in fact implies that
		\begin{equation*}
		\|c\|_{L^{q_5}(Q_{\tau+1,\tau+2})} \leq 2C_3 \quad \text{ for all } \quad \tau \in \mathbb N.
		\end{equation*}
		Since $a<q_5$, we get with $\frac 1a = \frac{1-\alpha}{1} + \frac{\alpha}{q_5}$,
		\begin{equation}\label{La_c}
			\sup_{\tau\in\mathbb N - \{0\} }\|c\|_{L^a(Q_{\tau,\tau+1})} \leq \sup_{\tau\in\mathbb N - \{0\} }\|c\|_{L^1(Q_{\tau,\tau+1})}^{1-\alpha}\|c\|_{L^{q_5}(Q_{\tau,\tau+1})}^{\alpha}\leq  (K_1^*)^{1-\alpha}(2C_3)^{\alpha} .
		\end{equation}
		We can now use this estimate, the uniform boundedness of $d$, and \eqref{m_La}, and apply the semigroup 
properties of the heat kernel to \eqref{eq_c_cutoff}, and get (for some constant $C>0$ only depending on $\Omega$, $n$, $\varepsilon_0$,
and $a$),
		\begin{equation*}
			\|\psi_\tau|\na c|\|_{L^{q_1}(Q_{\tau,\tau+2})} \leq C\, \|\psi_\tau' c - \psi_\tau c + \psi_\tau[\delta d + \beta m]\|_{L^a(Q_{\tau,\tau+2})}
		\end{equation*}
$$ \le C\, (6(K_1^*)^{1-\alpha} (2C_3)^{\alpha} + (3 K_1^*)^{\frac 1a}\beta + \delta\,\mu\, (2|\Omega|)^{1/a} ). $$
		Finally, by using $\psi_\tau \equiv 1$ on $(\tau+1,\tau+2)$, we obtain \eqref{gradient_c_Lq1}. Similarly, using \eqref{eventual-m_La}, we obtain \eqref{eventual-gradient_c_Lq1}.
	\end{proof}

	Using Lemmas \ref{uniform_m_La} and \ref{uniform_gradient_c_Lq1}, we obtain the following result:

	\begin{lemma}\label{uniform_m_Lp}
   Under the assumptions of Theorem \ref{thm:main}, we consider
  $(m,c,d)$, the unique solution to \eqref{sys_original} on $\Omega \times \R_+$.
		For any $1 \le p<\infty$, there exist a constant $K_p>0$ (depending on $\Omega$, $n$, $p$,  $a, \beta, \delta$, $\var_0$ and $\|m_0\|_{L^1(\Omega)}$, $\|c_0\|_{L^1(\Omega)}$, $\|d_0\|_{L^{\infty}(\Omega)}$, but not on $\tau$), and a constant $\tilde K_p>0$ (depending on $\Omega$, $n$, $p$,  $a, \beta, \delta$, $\var_0$ and  $\|d_0\|_{L^{\infty}(\Omega)}$, but not on $\tau$) such that
		\begin{equation}\label{es_uniform_m_Lp}
			\sup_{\tau\in\mathbb N - \{0\} }\|m\|_{L^p(Q_{\tau,\tau+1})} \leq K_p,
		\end{equation}
         and
         \begin{equation}\label{eventual-es_uniform_m_Lp}
			\limsup_{\tau\to\infty }\|m\|_{L^p(Q_{\tau,\tau+1})} \leq \tilde K_p.
		\end{equation}
	\end{lemma}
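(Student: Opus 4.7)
The plan is to replay the duality argument of Lemma \ref{lem:m_Lp} on each time slab $Q_{\tau,\tau+2}$, after multiplying the equation for $m$ by a smooth time cutoff as in Lemma \ref{uniform_gradient_c_Lq1}. This kills the ``initial'' term $\int_\Omega m(\cdot,\tau)\,\phi(\cdot,\tau)\,dx$ in the duality identity, so that no $L^p$-in-space control of $m(\cdot,\tau)$ is ever needed --- only the uniform-in-$\tau$ inputs provided by Lemmas \ref{uniform_m_La} and \ref{uniform_gradient_c_Lq1}.

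Concretely, I would reuse the cutoff $\psi_\tau$ introduced in Lemma \ref{uniform_gradient_c_Lq1}, so that $u := \psi_\tau m$ solves
\begin{equation*}
\partial_t u = \Delta u + \psi_\tau' m + \psi_\tau\, m(1-m^{a-1}) - \chi\,\nabla\cdot(\psi_\tau f(m)\nabla c)
\end{equation*}
on $Q_{\tau,\tau+2}$ with $u(\cdot,\tau)=0$ and Neumann boundary conditions. For $\theta\ge 0$ with $\|\theta\|_{L^{p'}(Q_{\tau,\tau+2})}\le 1$ (with $p'$ as in \eqref{e20}), I would then solve $\partial_t\phi+\Delta\phi=-\theta$ on $Q_{\tau,\tau+2}$ with Neumann data and $\phi(\cdot,\tau+2)=0$, and test it against $u$. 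Since $u(\cdot,\tau)=\phi(\cdot,\tau+2)=0$, every time-boundary term vanishes and, after the integration by parts of \eqref{e23}, one is left with three contributions: a cutoff term $\int\phi\psi_\tau' m$, a reaction term $\int\phi\psi_\tau m$, and a chemotaxis term $\chi\int\psi_\tau |f(m)||\nabla\phi||\nabla c|$. These three are estimated exactly as in the two-case analysis of $(I)$ and $(II)$ in Lemma \ref{lem:m_Lp}, but with \eqref{e15} replaced by Lemma \ref{uniform_m_La} and \eqref{e16} replaced by Lemma \ref{uniform_gradient_c_Lq1}, which are \emph{uniform in $\tau$}. The maximal regularity constants for $\phi$ depend only on the (fixed) cylinder length $2$. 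Taking the supremum over admissible $\theta$ gives, by duality,
\begin{equation*}
\|\psi_\tau m\|_{L^p(Q_{\tau,\tau+2})} \le F^*\bigl(1 + \|m\|_{L^p(Q_{\tau,\tau+2})}^\vartheta + \|m\|_{L^p(Q_{\tau,\tau+2})}^s\bigr),
\end{equation*}
with $0<\vartheta,s<1$ and $F^*$ independent of $\tau$.

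Setting $M(k):=\|m\|_{L^p(Q_{k,k+1})}$, so that $\|m\|_{L^p(Q_{\tau,\tau+2})}\le M(\tau)+M(\tau+1)$ and $\|\psi_\tau m\|_{L^p(Q_{\tau,\tau+2})}\ge M(\tau+1)$, the estimate above becomes a self-referential recursion
\begin{equation*}
M(\tau+1) \le F^*\bigl(1+(M(\tau)+M(\tau+1))^\vartheta+(M(\tau)+M(\tau+1))^s\bigr).
\end{equation*}
I would close it by replaying the monotonicity trick of Lemma \ref{uniform_gradient_c_Lq1}: on $\Lambda:=\{\tau:M(\tau)\le M(\tau+1)\}$ one has $M(\tau)+M(\tau+1)\le 2M(\tau+1)$, and Young's inequality absorbs the sublinear $\vartheta$- and $s$-powers of $M(\tau+1)$ to give $M(\tau+1)\le K$ for a uniform constant $K$; for $\tau\notin\Lambda$ one simply has $M(\tau+1)<M(\tau)$. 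An elementary induction then produces $M(k)\le\max(K,M(1))$ for all $k\ge 1$, which yields \eqref{es_uniform_m_Lp}, since $M(1)$ is controlled by Proposition \ref{prop11} applied with $T=2$. The $\limsup$ bound \eqref{eventual-es_uniform_m_Lp} follows by rerunning the same recursion at large $\tau$ using the eventual estimates \eqref{eventual-m_La} and \eqref{eventual-gradient_c_Lq1} in place of their uniform counterparts; after sufficiently many iterations the memory of $M(1)$ is lost, and the resulting constant $\tilde K_p$ depends only on $\|d_0\|_{L^\infty(\Omega)}$ and the fixed parameters.

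The main technical obstacle is to ensure that the duality estimate genuinely produces \emph{strictly sublinear} exponents $\vartheta,s<1$ in $\|m\|_{L^p(Q_{\tau,\tau+2})}$, for only then does the monotonicity--Young step close the recursion. This is exactly what assumption \eqref{e2} was engineered to guarantee in the proof of Lemma \ref{lem:m_Lp} via the choice \eqref{e3} of $s\in(0,1)$ and $p'$ from \eqref{e20}; the same choice works here verbatim, since the fixed cylinder length $2$ plays no qualitative role.
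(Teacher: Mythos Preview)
Your proposal is correct and follows essentially the same approach as the paper's own proof: multiply by the time cutoff $\psi_\tau$ to zero out the initial boundary term, run the duality argument of Lemma~\ref{lem:m_Lp} on $Q_{\tau,\tau+2}$ with the uniform inputs from Lemmas~\ref{uniform_m_La} and~\ref{uniform_gradient_c_Lq1}, obtain a recursion with strictly sublinear exponents $\vartheta,s<1$, and close it via the monotonicity trick already used in Lemma~\ref{uniform_gradient_c_Lq1}. The paper's write-up is somewhat more implicit about the base case $M(1)$ (you handle it via Proposition~\ref{prop11}), but the strategy and all key steps coincide.
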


	\begin{proof}
		The proof of this lemma is similar to that of Lemma \ref{lem:m_Lp}, except that we will consider the cylinder $Q_{\tau,\tau+2}$ instead of $Q_T$. More precisely, by multiplying the equation satisfied by 
$m$ in \eqref{sys_original} with $\psi_\tau$, we get
		\begin{equation}\label{eq_m_cutoff}
			\pa_t(\psi_\tau m) = \Delta(\psi_\tau m) + \psi_\tau' m + \psi_\tau m(1-m^{a-1}) - \chi \psi_\tau \na\cdot(f(m)\na c),
		\end{equation}
		and $(\psi_\tau m)(x,\tau) = 0$. Let $s\in (0,1)$ satisfy \eqref{e3}, and choose $p'$ as in \eqref{e20}. Consider $0\leq \theta \in L^{p'}(Q_{\tau,\tau+2})$ with $\|\theta\|_{L^{p'}(Q_{\tau,\tau+2})} = 1$. Let $\phi$ be the unique nonnegative solution to the equation
		\begin{equation*}
			\begin{cases}
				\pa_t\phi + \Delta \phi = -\theta, &(x,t)\in Q_{\tau,\tau+2},\\
				\na\phi\cdot \nu = 0, &(x,t)\in \pa\Omega\times(\tau,\tau+2),\\
				\phi(x,\tau+2) = 0, &x\in\Omega.
			\end{cases}
		\end{equation*}
		From  the semigroup properties of the heat equation (Lemma \ref{embedding}),
 we know that there exists a constant $L_{p'}>0$ (depending on $\Omega$, $n$, $p$ but not $\tau$) such that
		\begin{equation}\label{f6}
			\|\phi\|_{L^{q_2}(Q_{\tau,\tau+2})} + \|\na \phi\|_{L^{q_3}(Q_{\tau,\tau+2})} \leq L_{p'} ,
		\end{equation}
		with $q_2$ and $q_3$ defined in \eqref{q2q3}. By integration by parts, we estimate similarly to \eqref{e23} and get
		\begin{equation}\label{f7}
		\begin{aligned}
			\int_{\tau}^{\tau+2}\int_{\Omega}(\psi_\tau m)\theta dxdt &\leq \int_{\tau}^{\tau+2}\int_{\Omega}(\psi_\tau' m + \psi_\tau m)\,
\phi\, dxdt + \chi\int_{\tau}^{\tau+2}\int_{\Omega} |f(m)||\na \phi| |\na c|\, dxdt\\
			&=: (III) + (IV).
		\end{aligned}
		\end{equation}
		We estimate $(III)$ similarly to $(I)$ in \eqref{e23}. More precisely, in the case when $a\geq \frac{n+2}{2}$, we have (for some $C_4>0$ depending only on $\Omega$, $n$, $p$,  $a, \beta, \delta$, $\var_0$ and $\|m_0\|_{L^1(\Omega)}$, $\|c_0\|_{L^1(\Omega)}$, $\|d_0\|_{L^{\infty}(\Omega)}$)
		\begin{equation}\label{f8}
			(III) \leq 3\|\phi\|_{L^{\frac{a}{a-1}(Q_{\tau,\tau+2})}}\|m\|_{L^a(Q_{\tau,\tau+2})}
 \leq C_4,
		\end{equation}
		and in the case when  $a < \frac{n+2}{2}$, we have (for some $C_5>0$ depending only on $\Omega$, and some $C_6>0$ depending on the same quantites as $C_4$)
		\begin{equation}\label{f9}
			\begin{aligned}
			(III) \leq 3\int_{\tau}^{\tau+2}\int_{\Omega}\phi m^{1-\vartheta}m^{\vartheta}dxdt &\leq C_5\,
\|\phi\|_{L^{q_2}(Q_{\tau,\tau+2})}\|m\|_{L^a(Q_{\tau,\tau+2})}^{1-\vartheta}\|m\|_{L^p(Q_{\tau,\tau+2})}^{\vartheta}\\
			&\leq C_6\, \|m\|_{L^p(Q_{\tau,\tau+2})}^{\vartheta},
			\end{aligned}
		\end{equation}
		where $\vartheta$ is defined in \eqref{vartheta}. It follows from \eqref{f8} and \eqref{f9} that, in any case, we have the estimate
		\begin{equation}\label{estimate_III}
			(III) \leq  \max(C_4, C_6)\, \bra{1+\|m\|_{L^p(Q_{\tau,\tau+2})}^{\vartheta}}.
		\end{equation}
		The term $(IV)$ can be estimated similarly to $(II)$ in \eqref{e27} and \eqref{e28}. More precisely, it follows from H\"older's inequality that
(for some $C_7, C_8>0$ depending only on $\Omega$, and $C_9>0$  depending on the same quantites as $C_4$ and $b, \gamma, \chi$)
		\begin{equation*}
			\begin{aligned}
				(IV) &\leq \chi\gamma\int_{\tau}^{\tau+2}\int_{\Omega}|m|^b|\na \phi||\na c|dxdt\\
				&\leq \chi\gamma \bra{\int_{\tau}^{\tau+2}\int_{\Omega}m^{p'(b-s)}|\na \phi|^{p'}|\na c|^{p'}dxdt}^{\frac{1}{p'}}\|m\|_{L^p(Q_{\tau,\tau+2})}^{s}\\
				&\leq C_7\, \chi \gamma\|\na \phi\|_{L^{q_3}(Q_{\tau,\tau+2})}\|m\|_{L^a(Q_{\tau,\tau+2})}^{b-s}\|\na c\|_{L^{q_4}(Q_{\tau,\tau+2})}\|m\|_{L^p(Q_{\tau,\tau+2})}^{s} \quad \text{($q_4$ is defined in (\ref{q4}))}\\
				&\leq C_8\, \chi \gamma L_{p'}\|m\|_{L^a(Q_{\tau,\tau+2})}^{b-s}\|\na c\|_{L^{q_1}(Q_{\tau,\tau+2})}\|m\|_{L^p(Q_{\tau,\tau+2})}^{s} \quad \text{(since $q_4<q_1$, see (\ref{q4<q1}))}\\
				&\leq C_9\,\|m\|_{L^p(Q_{\tau,\tau+2})}^s.
			\end{aligned}
		\end{equation*}
		Inserting this and \eqref{estimate_III} into \eqref{f7}, we get
		\begin{equation*}
			\int_{\tau}^{\tau+2}\int_{\Omega}(\psi_\tau m)\theta dxdt \leq \max(C_4, C_6, C_9)\, \bra{1+\|m\|_{L^p(Q_{\tau,\tau+2})}^{\vartheta} + \|m\|_{L^p(Q_{\tau,\tau+2})}^s}
		\end{equation*}
		for all $0\leq \theta \in L^{p'}(Q_{\tau,\tau+2})$ satisfying $\|\theta\|_{L^{p'}(Q_{\tau,\tau+2})} = 1$. Therefore, by duality, we get
(for $C_{10} := \max(C_4, C_6, C_9))$
		\begin{equation}\label{f10}
			\|\psi_\tau m\|_{L^{p}(Q_{\tau,\tau+2})} \leq C_{10}\, \bra{1+\|m\|_{L^p(Q_{\tau,\tau+2})}^{\vartheta} + \|m\|_{L^p(Q_{\tau,\tau+2})}^s}.
		\end{equation}
		Define $\Theta:= \abra{\tau+1 \in \mathbb N\,:\,\|m\|_{L^p(Q_{\tau,\tau + 1})} \leq \|m\|_{L^p(Q_{\tau + 1,\tau+2})}}$. Then for any $\tau\in \Theta$, we have
		\begin{equation*}
			\begin{aligned}
			\text{RHS of (\ref{f10})} &\leq  2\,C_{10}\, \bra{1+ \|m\|_{L^p(Q_{\tau+1,\tau+2})}^{\vartheta}+ \|m\|_{L^p(Q_{\tau+1,\tau+2})}^s}\\
			&\leq C_{11} + \frac 12 \|m\|_{L^p(Q_{\tau+1,\tau+2})},
			\end{aligned}
		\end{equation*}
(where $C_{11}$ depends on the same quantities as $C_9$)
		thanks to the fact that $0<\vartheta, s < 1$.
 From this, \eqref{f10}, and $\psi_\tau\equiv 1$ on $(\tau+1,\tau+2)$, we get
		\begin{equation*}
			\|m\|_{L^p(Q_{\tau+1,\tau+2})} \leq C_{11} + \frac 12 \|m\|_{L^p(Q_{\tau+1,\tau+2})}  \quad \text{ for all } \quad \tau\in \Theta,
		\end{equation*}
		and consequently
		\begin{equation*}
			\|m\|_{L^p(Q_{\tau+1,\tau+2})} \leq 2\,  C_{11} \quad \text{ for all } \quad \tau\in \Theta.
		\end{equation*}
		Thanks to the definition of $\Theta$,  we finally obtain the desired estimate \eqref{es_uniform_m_Lp}. In a similar manner, we obtain \eqref{eventual-es_uniform_m_Lp}.
	\end{proof}
	We are now ready to show the uniform-in-time boundedness of the solution to \eqref{sys_original}.
\medskip

		The uniform-in-time boundedness of $d$ is already shown. It remains to show it for $m$ and $c$. We start by rewriting  equation \eqref{eq_c_cutoff} as
		\begin{equation*}
			\pa_t(\psi_\tau c) = \eps_0\, \psi_\tau \,\Delta c + (\psi_\tau' - \psi_\tau)c + H , \quad \text{ with } H = \psi_\tau[\delta d + \beta m].
		\end{equation*}
		Thanks to the Lemma \ref{uniform_m_Lp}, we know that for any $1\le p<\infty$, 
 $\sup_{\tau \in \mathbb N}\|H\|_{L^p(Q_{\tau,\tau+2})} \leq \beta\, K_p + \delta\,(2 |\Omega|)^{1/p}$.
 Proceeding as in Lemma \ref{uniform_gradient_c_Lq1}, we see 
 that for any $1\leq p<\infty$, there exists $K_p^*>0$ (depending on the same quantities as $K_p$) such that $\sup_{\tau\in \N}\norm{c}_{L^p(Q_{\tau,\tau+2})} \leq K_p^*$. By maximal regularity, recalling that $(\psi_\tau c)(x,\tau) = 0$, we have
(for some $C>0$ depending on $\Omega$, $p$, $n$, $\var_0$, but not $\tau$)
		\begin{equation*}
			\|\psi_\tau c\|_{W^{2,1}_{p}(Q_{\tau,\tau+2})} \leq C\|(\psi_\tau' - \psi_\tau)c + H\|_{L^p(Q_{\tau,\tau+2})} ,
		\end{equation*}
so that picking $p>\frac{n+2}{2}$, we can use the semigroup properties of the heat kernel (Lemma \ref{embedding})
in order
to get 
		\begin{equation*}
			\|\psi_\tau c\|_{L^\infty(Q_{\tau,\tau+2})} \leq C\|\psi_\tau c\|_{W^{2,1}_p(Q_{\tau,\tau+2})} \leq C_{[\frac{n+2}2] + 1}.
		\end{equation*}
		Due to $\psi_\tau \equiv 1$ on $(\tau+1,\tau+2)$, we finally obtain
		\begin{equation*}
			\|c\|_{L^\infty(Q_{\tau+1,\tau+2})} \leq C_{[\frac{n+2}2] + 1} .
		\end{equation*}
		Moreover, by picking $p>n+2$, we obtain also the uniform estimate for $\na c$, i.e.
 (for some $C>0$ depending on $\Omega$, $p$, $n$, $\var_0$, but not on $\tau$)
		\begin{equation}\label{grad_c}
			\|\na c\|_{L^\infty(Q_{\tau+1,\tau+2})} \leq \|\psi_\tau|\na c|\|_{L^\infty(Q_{\tau,\tau+2})} \leq C\|c\|_{W^{2,1}_p(Q_{\tau,\tau+2})} \leq C_{n + 3}.
		\end{equation}
This concludes the proof of boundedness of $c$ (and $\nabla c$).  
\medskip

		For the uniform boundedness of $m$, we observe that from \eqref{eq_m_cutoff}, one can write the Duhamel formula for all $t\in (\tau,\tau+2)$
		\begin{equation*}
		\psi_\tau m(t) = \int_{\tau}^tS(t-\tau-s)\sbra{(\psi_\tau'm) + \psi_\tau m(1-m^{a-1}) - \chi\psi_\tau \na\cdot(f(m)\na c)} (s)\, ds
		\end{equation*}
		where $\{S(t)\}$ is the semigroup of the heat kernel with homogeneous Neumann boundary condition. We have the following estimates of $\{S(t)\}$, (cf \cite {winkler2010aggregation}), for $p$ and $q$ well chosen,
		\begin{equation*}
			\|S(t)f\|_{L^\infty(\Omega)} \leq C^*\bra{1+t^{-\frac{n}{2p}}}\|f\|_{L^p(\Omega)} \; \text{ and } \; \|S(t)\na f\|_{L^\infty(\Omega)} \leq C^*\bra{1+t^{-\frac 12 - \frac{n}{2q}}}\|f\|_{L^q(\Omega)},
		\end{equation*}
where $C^*$ only depends on $\Omega$, $n$, $p$, and $q$.
\par
		Using these estimates into the above Duhamel formula, we get
		\begin{equation*}
		\begin{aligned}
			\sup_{t\in(\tau,\tau+2)}&\|\psi_\tau m(t)\|_{L^\infty(\Omega)}\\
			&\leq 3 \int_\tau^{\tau+2}\|[S(t-\tau-s)m](s)\|_{L^\infty(\Omega)}ds  + \int_{\tau}^{\tau+2}\|[S(t-\tau-s)m^a](s)\|_{L^\infty(\Omega)}ds\\
			&\quad + \chi \int_{\tau}^{\tau+2}\|[S(t-\tau-s)\na \cdot(f(m)\na c)](s)\|_{L^\infty(\Omega)}ds\\
			&\leq 3C^*\, \|m\|_{L^p(Q_{\tau,\tau+2})}\int_{\tau}^{\tau+2}\bra{1+(t-\tau-s)^{-\frac{n}{2p}}}ds\\
			&\quad + C^*\,\|m\|_{L^{ap}(Q_{\tau,\tau+2})}^a\int_\tau^{\tau+2}\bra{1+(t-\tau-s)^{-\frac{n}{2p}}}ds\\
			&\quad + C^*\, \gamma\, \chi\,\|\na c\|_{L^\infty(Q_{\tau,\tau+2})}\|m\|_{L^{bp}(Q_{\tau,\tau+2})}^{b}
\int_{\tau}^{\tau+2}\bra{1+(t-\tau-s)^{- \frac12 -\frac{n}{2p}}}ds.
		\end{aligned}
		\end{equation*}
		By applying Lemma \ref{uniform_m_Lp} and \eqref{grad_c}, we can choose $p>n$, which implies that the last three integrals converge and are  bounded w.r.t $\tau$. We finally obtain the estimate (for a constant $C>0$ depending on the parameters 
stated in Theorem \ref{thm:main})
		\begin{equation*}
			\sup_{t\in(\tau,\tau+2)}\|\psi_\tau m\|_{L^\infty(\Omega)} \leq C.
		\end{equation*}
		That is, \eqref{uniform-in-time} is proved. The result in \eqref{eventual-uniform-in-time} is obtained in a similar manner, by applying the earlier $\limsup$ estimates and parabolic regularity, concluding the proof of Theorem \ref{thm:main}.

\section{Nonlinear stability and Turing instability}\label{sec:stability}

 We study in this section the homogeneous equilibria of system (\ref{sys_original}). Those equilibria $(\bar{m}, \bar{c}, \bar{d})$ satisfy $\bar{m}= {\bar{m}}^a$, $\delta \, \bar{d}  + \beta\, \bar{m} = \bar{c}$, and $g(\bar{m})\, (1- \bar{d})=0$. They are therefore given by $(1, \beta + \delta, 1)$, $(0, \delta, 1)$, and, if $g(0)=0$, $(0, \delta\,\zeta, \zeta)$ for all $\zeta \ge 0$. 
\medskip

It is easy to see that the equilibria $(0, \delta, 1)$, and (if $g(0)=0$)  $(0, \delta\,\zeta, \zeta)$ for all $\zeta \ge 0$, are linearly unstable by considering $x$-independent solutions to (\ref{sys_original}).
\medskip

In this section, we assume that $f(1)>0$. 

\medskip

In order to study the stability of the positive equilibrium $(\bar{m}, \bar{c}, \bar{d}) = (1, \beta + \delta, 1)$, we consider $\tilde{m} := m -  \bar{m}$, $\tilde{c} := c - \bar{c}$,
 $\tilde{d} := d - \bar{d}$. 
The system (\ref{sys_original}) can be rewritten under the form
\begin{equation}\label{xm}
  \pa_t \tilde{X} = L_1\, \tilde{X}  +  L_2\, \tilde{X} + R(\tilde{X}), 
\end{equation}
where
$$ \tilde{X} := \left( \begin{array}{c} \td{m}\\ \td{c}\\ \td{d} \end{array} \right) . $$
The linear part of the system is associated to the matrices
$$  L_1 := \left( \begin{array}{ccc} 1 - a & 0 & 0\\  \beta & -1 & \delta \\ 0 & 0 & - g(1) \end{array} \right),  \qquad L_2 := \left( \begin{array}{ccc} \Delta &  -\chi\,f(1)\, \Delta & 0\\  0 & \var_0\, \Delta & 0 \\ 0 & 0 & 0 \end{array} \right),$$
and the nonlinear part of the system is
$$
R(\tilde{X}) :=  \left(\begin{array}{c}
R_1(\td X)\\ R_2(\td X) \\ R_3(\td X)
\end{array} \right) = 
\left( \begin{array}{c}  -[(1+\tilde{m})^a - 1 - a\,\tilde{m}]
- \chi\nabla\cdot\{ [f(1+\tilde{m}) - f(1)]\, \nabla \tilde{c}  \}\\ 0   \\ - [g(1 + \tilde{m}) - g(1)]\, \tilde{d}\end{array} \right).
$$

\medskip
We will show that when $\chi$ is small enough, the steady state $(\bar{m}, \bar{c}, \bar{d})$ is nonlinearly
asymptotically stable. This is obtained by first proving that the linear part possesses a spectral gap. This implies the exponential decay (for the linear part) of the solution. Thanks to the fact that the solutions are uniformly bounded in time, we then show
that the nonlinear part is dominated by the linear one provided that the initial data are close enough to $(\bar{m}, \bar{c}, \bar{d})$, and thus finally obtain the nonlinear stability.

\medskip

Denoting by $(\lam_n)_{n \in \N}$ the eigenvalues of $-\Delta$ (with Neumann boundary condition) on $\Omega$ in such a way that 
$\lam_0 = 0 < \lam_1 \le \lam_2 \le ... \to +\infty$, we see that the matrix of the linearized system associated to  (\ref{xm}) projected on the eigenspace $\mathbb P\, e_n$ of $L^2(\Omega)$ associated to $\lam_n$, is given by 
$$ M_n := \left( \begin{array}{ccc} 1 - a - \lam_n&  \chi\,f(1)\,\lam_n & 0\\  \beta & -1 - \var_0\,\lam_n& \delta \\ 0 & 0 & - g(1) \end{array} \right) . $$
Recalling that $g(1) >0$, we see that the spectral properties of $M_n$ can be studied by computing the trace and the determinant of the extractred (from $M_n$)  matrix 
$$ N_n := \left( \begin{array}{cc} 1 - a - \lam_n&  \chi\,f(1)\,\lam_n \\  \beta & -1 - \var_0\,\lam_n \end{array} \right) . $$
 The trace of $ N_n$ is 
$$ {\hbox{ Tr }}  N_n = - a - (1 + \var_0)\, \lam_n <0 \quad \text{ for all } \quad n\in \mathbb N. $$
 The determinant of $ N_n $ is 
$$ {\hbox{ Det }}  N_n = [a - 1] + [1 + \var_0\,(a-1) - \chi\,f(1)\,\beta]\,\lam_n + \var_0\, \lam_n^2 . $$ 
We define 
\begin{equation}\label{chi_c}
\chi_c := \sup \{\chi \, | \, \forall n \in \N,  \quad [a - 1] + [1 + \var_0\,(a-1) - \chi\,f(1)\,\beta] \,\lam_n + \var_0\, \lam_n^2 >0\} , 
\end{equation}
and observe that 
\begin{equation}\label{chi_c0}
\chi_c \ge \chi_{c0} := \frac{2 \sqrt{\var_0(a-1)} + 1 + \var_0\,(a-1)}{f(1)\, \beta} > 0.
\end{equation}
Note that $\chi_{c0}$ does not depend 
on $\Omega$ (and there exists $\Omega$ such that $\chi_c = \chi_{c0}$).
\medskip

\subsection{Nonlinear stability}\label{local_stabil}
\begin{proof}[Proof of Theorem \ref{thm:main2}]
Thanks to Theorem \ref{thm:main}, we recall that there exist $K_m, K_c, K_d>0$ such that for all $t>0$,
\begin{equation}\label{Linfbound}
	\norm{\td m(t)}_{L^\infty(\Omega)} \leq K_m, \quad \norm{\td c(t)}_{W^{1,\infty}(\Omega)} \leq K_c, \quad \norm{\td d(t)}_{L^\infty(\Omega)} \leq K_d.
\end{equation}

It follows from the condition \eqref{sub_critical} that 
there exist $\alpha\in (0,1)$ and $\vt_1 > 0$ such that
\begin{equation}\label{a1}
	\vt_1 < \frac{4\alpha(a-1)}{\beta^2} \quad \text{ and } \quad \chi < \frac{2\sqrt{\vt_1\eps_0}}{f(1)}.
\end{equation}
Now we choose
\begin{equation}\label{a2}
 \vt_2 > \frac{\delta^2 \vt_1}{4g(1)(1-\alpha)},
\end{equation}
and define a function
\begin{equation*}
	\varphi(t) = \frac 12\bra{ \|\tilde{m}\|_{L^2(\Omega)}^2 + \vt_1\|\tilde{c}\|_{L^2(\Omega)}^2 + \vt_2\|\tilde{d}\|_{L^2(\Omega)}^2}.
\end{equation*}
Direct calculations give
\begin{equation}\label{phi'}
\begin{aligned}
	\varphi'(t) & = -\norm{\na \td{m}}_{L^2(\Omega)}^2 + \chi f(1)\inner{\na \td m}{\na \td c} - \vt_1\eps_0\norm{\na \td c}_{L^2(\Omega)}^2\\
	& \; + (1-a)\norm{\td m}_{L^2(\Omega)}^2 - \vt_1\norm{\td c}_{L^2(\Omega)}^2 - \vt_2 g(1)\|\td d\|_{L^2(\Omega)}^2 + \beta \vt_1\inner{\td m}{\td c} + \vt_1\delta \inner{\td d}{\td c}\\
	& \; + \inner{R_1(\td X)}{\td m} + \inner{R_3(\td X)}{\vt_2\td d}.
\end{aligned}
\end{equation}
From the relation between $\chi$ and $\vt_1$ in \eqref{a1}, there exists $\omega_1>0$ such that
\begin{equation*}
	-\norm{\na \td{m}}_{L^2(\Omega)}^2 + \chi f(1)\inner{\na \td m}{\na \td c} - \vt_1\eps_0\norm{\na \td c}_{L^2(\Omega)}^2 \leq -\omega_1\bra{\norm{\na \td m}_{L^2(\Omega)}^2 + \norm{\na \td c}_{L^2(\Omega)}^2}.
\end{equation*}
It also follows from the condition of $\vt_1$ in \eqref{a1} that there exists $\omega_2>0$ satisfying
\begin{equation*}
	(1-a)\norm{\td m}_{L^2(\Omega)}^2 + \beta \vt_1 \inner{\td m}{\td c} - \alpha \vt_1\norm{\td c}_{L^2(\Omega)}^2 \leq -\omega_2\bra{\norm{\td m}_{L^2(\Omega)}^2 + \norm{\td c}_{L^2(\Omega)}^2} ,
\end{equation*}
where $\alpha$ is chosen in \eqref{a1}. Finally, from \eqref{a2}, there exists $\omega_3>0$ such that
\begin{equation*}
	-(1-\alpha)\vt_1\norm{\td c}_{L^2(\Omega)}^2 + \vt_1\delta \inner{\td d}{\td c} - \vt_2g(1)\|\td d\|_{L^2(\Omega)}^2 \leq -\omega_3\bra{\|\td c\|_{L^2(\Omega)}^2 + \|\td d\|_{L^2(\Omega)}^2}.
\end{equation*}
Note that these three estimates mean that the linear part of \eqref{xm}, after the rescaling by $\mathrm{diag}(1,\vt_1, \vt_2)$, has a negative spectral gap. Inserting all these estimates into \eqref{phi'}, we obtain for some $\omega_4,\, \omega_5 > 0$ that
\begin{equation}\label{phi2}
	\varphi'(t) \leq -\omega_4\bra{\norm{\td m}_{H^1(\Omega)}^2 + \norm{\td c}_{H^1(\Omega)}^2} - \omega_5\varphi(t) + \inner{R_1(\td X)}{\td m} + \inner{R_3(\td X)}{\vt_2 \td d}.
\end{equation}

To deal with the two last terms on the right-hand side of \eqref{phi2}, we first use integration by parts to have
\begin{equation}\label{h3}
	\begin{aligned}
	&\inner{R_1(\td X)}{\td m} + \inner{R_3(\td X)}{\vt_2 \td d}\\
	&= -\int_{\Omega}\td{m}\sbra{(1+\td m)^a - 1 - a\td m}dx + \chi \int_{\Omega}\sbra{f(1+\td m) - f(1)}\na \td c\cdot \na \td m dx\\
	&\quad - \vt_2\int_{\Omega}\sbra{g(1+\td m)-g(1)}\td{d}^2dx =: (A) + (B) + (C).
	\end{aligned}
\end{equation}

Now we fix  a constant $\kappa = \kappa(n) \in (0,\min\{a-1,1\})$ small enough such that $H^1(\Omega)\hookrightarrow L^{2+2\kappa}(\Omega)$.  Thanks to this embedding and interpolation inequality, we have (for some $C(\Omega,\kappa)$ which only depends 
on $\Omega$ and $\kappa$)
\begin{equation}\label{interpolation}
	\norm{\td m}_{L^{2+\kappa}(\Omega)}^{2+\kappa} \leq \norm{\td m}_{L^{2+2\kappa}(\Omega)}^2\norm{\td m}_{L^{1+\kappa}(\Omega)}^{\kappa} \leq C(\Omega,\kappa)\norm{\td m}_{H^1(\Omega)}^2\norm{\td m}_{L^{2}(\Omega)}^{\kappa}.
\end{equation}

To estimate $(A)$, we use Taylor's expansion and the fact that $\tilde m$ is uniformly bounded in $L^{\infty}(\Omega)$ to get 
\begin{equation}\label{A}
\begin{aligned}
|(A)| &\leq\int_{\Omega}|\tilde{m}| |(1+\tilde{m})^a - 1 - a\,\tilde{m}|dx\\
&\leq C(a, K_m)\int_{\Omega}|\td m|^{1+a}dx\\
&\leq C(a,K_m)K_m^{a-1-\kappa}\int_{\Omega}|\td m|^{2+\kappa}dx \quad (\text{since }\kappa < a-1)\\
&\leq C(a,K_m,\kappa)\norm{\td m}_{L^{2+\kappa}(\Omega)}^{2+\kappa}\\
&\leq C(a,K_m,\kappa)\norm{\td m}_{H^1(\Omega)}^2\norm{\td m}_{L^2(\Omega)}^{\kappa} \quad (\text{using } (\ref{interpolation})).
\end{aligned}
\end{equation}
Similarly for $(B)$ and $(C)$
\begin{equation}\label{C}
\begin{aligned}
|(C)| &\leq \vt_2\int_{\Omega} |g(1 +\tilde{m}) - g(1)|\tilde{d}^2dx \le C(K_m) \int_{\Omega}|\tilde{m}| |\tilde{d}|^2dx \quad (\text{since }g\in C^{1,0})\\
&\leq C(K_m, K_d)\int_{\Omega}|\td m||\td d|^{\frac{2(1+\kappa)}{2+\kappa}}dx\\
&\leq C(K_m,K_d,\omega_5)\int_{\Omega}|\td m|^{2+\kappa}dx + \frac{\omega_5\vt_2}{4}\int_{\Omega}|\td d|^2dx\quad (\text{by Young's inequality})\\
&\leq C(K_m,K_d,\omega_5,\kappa)\norm{\td m}_{H^1(\Omega)}^2\norm{\td m}_{L^2(\Omega)}^{\kappa} + \frac{\omega_5}{2}\varphi(t) \quad (\text{using } (\ref{interpolation})),
\end{aligned}
\end{equation}
and
\begin{equation}\label{B}
\begin{aligned}
|(B)| &\leq \chi \int_{\Omega}|f(1+\tilde{m}) - f(1)||\nabla \tilde{c}||\nabla \tilde{m}|dx\\
&\le C(K_m) \int_{\Omega}|\tilde{m}| 
 |\nabla \tilde{c}||\nabla \tilde{m}|dx \quad (\text{since }f\in C^{1,0})\\
 &\leq C(K_m, K_c)\int_{\Omega}|\td m||\na \td c|^{\frac 13}|\na \td m|dx \quad (\text{thanks to } (\ref{Linfbound}))\\
 &\leq \frac{\omega_4}{2}\int_{\Omega}\bra{|\na \td m|^2 + |\na \td c|^2}dx + C(K_m,K_c,\omega_4)\int_{\Omega}|\td m|^3dx \quad (\text{by Young's inequality})\\
 &\leq \frac{\omega_4}{2}\bra{\norm{\td m}_{H^1(\Omega)}^2 + \norm{\td c}_{H^1(\Omega)}^2} + C(K_m,K_c,\omega_4)K_m^{1-\kappa}\int_{\Omega}|\td m|^{2+\kappa}dx\\
 &\leq \frac{\omega_4}{2}\bra{\norm{\td m}_{H^1(\Omega)}^2 + \norm{\td c}_{H^1(\Omega)}^2} + C(K_m,K_c,\omega_4,\kappa)\norm{\td m}_{H^1(\Omega)}^2\norm{\td m}_{L^2(\Omega)}^{\kappa} \; (\text{using } (\ref{interpolation})).
\end{aligned}
\end{equation}

By inserting \eqref{A}--\eqref{C}--\eqref{B} into \eqref{phi2} we obtain
\begin{align*}
\varphi'(t) &\leq - \frac{\omega_5}{2}\varphi(t) + \bra{\|\td m\|_{H^1(\Omega)}^2 + \|\td c\|_{H^1(\Omega)}^2}\sbra{C\|\td m\|_{L^2(\Omega)}^{\kappa} - \frac{\omega_4}{2}}\\
&\leq - \frac{\omega_5}{2}\varphi(t) + \bra{\|\td m\|_{H^1(\Omega)}^2 + \|\td c\|_{H^1(\Omega)}^2}\sbra{C\varphi(t)^{\kappa/2} - \frac{\omega_4}{2}} .
\end{align*}
Therefore, if $\varphi(0)$ is small enough, $t \mapsto \varphi(t)$ is decreasing and consequently
\begin{equation*}
	\varphi(t) \leq e^{-\frac{\omega_5}{2}t}\varphi(0)
\end{equation*}
which proves the nonlinear (exponential) stability in $L^2$ of $(\bar{m}, \bar{c}, \bar{d})$. Interpolating with the  estimate (\ref{uniform-in-time}), we
conclude the proof of Thm. \ref{thm:main2}.
\end{proof}

\subsection{Turing instability and Turing patterns}\label{subsec:Turing}
When $\chi > \chi_c$, the equilibrium point $(1, \beta + \delta, 1)$ is linearly unstable, so that a Turing-type instability sets in. In this section, we display a numerical simulation  showing the occurrence of this instability on the spatial domain $ \Omega=[0;L_x]\times[0;L_y] $. 

First, we recall that on rectangular domains defined by $ 0<x<L_x $ and $ 0<y<L_y $, the Fourier transformation of solutions to the linearized problem with homogeneous Neumann boundary conditions has the following form:

\begin{equation}\label{eq:fourier}
\tilde{X}=\sum_{p,q \in \N } \textbf{f}_{p,q} e^{\sigma(\lambda_n) t} \cos\left(\frac{p \pi}{L_x}x\right)\cos\left(\frac{q \pi}{L_y}y\right),
\end{equation}
where $ \textbf{f}_{p,q} $ are the Fourier coefficients of the initial conditions, and $ \lambda_n $ are defined as follows:
\begin{equation}\label{eq:lambda_cond}
\lambda_n=\left(\frac{p \pi}{L_x}\right)^2+\left(\frac{q \pi}{L_y}\right)^2, \text{ with } p, \, q \in \N,
\end{equation}
in such a way that they are ordered (that is $  \lambda_n \leq \lambda_{n+1}$). The values $ \sigma(\lambda_n) $ are derived from the dispersion relations $ \sigma^2 - \mathrm{Tr} N_n \sigma + \mathrm{Det} N_n = 0 $. When $ \chi<\chi_c, \, \sigma(\lambda_n)<0, \, \forall n \in \N $, and then the homogeneous state is linearly stable.
For $ \chi=\chi_c $, the homogeneous solution is marginally stable and $ \exists \lambda_{n_{c}} $ such that $ \min(\mathrm{Det} N_{n_{c}})=0 $, or in other words $ \sigma(\lambda_{n_{c}})=0 $. For $ \chi>\chi_c $, there exists a range $ [n_1; n_2] $ such that $ \sigma(\lambda_n) > 0$, $ \forall n \in [n_1; n_2] $,
 and therefore a Turing instability occurs.

The numerical simulation has been performed on the square domain $ [0;\pi]\times[0;\pi] $, i.e. $L_x = L_y = \pi$. For the discretization in space, we have adopted a
Fourier spectral scheme with $ 64 \times 64 $ modes. The integration in time has been realized by using the Crank-Nicholson method for the diffusive part and a second-order Runge-Kutta explicit method for
the reaction terms. 

We have adopted the functions
\begin{equation*}
	f(m) = \frac{m}{1+m} \quad \text{ and } \quad g(m) = \frac{rm^2}{1+m},
\end{equation*}
which corresponds to the special case \eqref{special} (except for parameter $a$), and the following parameter values have been fixed: $ r=1, \, a=3, \,\varepsilon_0=0.03125, \, \delta=1, \, \beta=1 $.
For this set of values, linear stability analysis predicts that $ \lambda_{n_{c}}=8 $ and $ \chi_c=3.125 $. In order to obtain a Turing pattern, we have fixed $ \chi=3.18 $ (which is bigger than $\chi_c$) and for this choice the only unstable $ \lambda_n $ allowed by the domain and the boundary conditions is $ \lambda_{n_{c}} $, which has multiplicity one, in that $ (p, q)=(2,2) $ is the only pair which satisfies the condition \eqref{eq:lambda_cond}. 

We therefore expect that a random perturbation of the homogeneous state evolves towards a
 stationary Turing pattern of the form $ \tilde{X} \propto \cos(2 x)\cos(2 y) $.

The temporal evolution of the numerical solution is shown in Fig.\ref{fig:Turing_instability}. As predicted by the previous analysis, a small random perturbation (see Fig.\ref{fig:Turing_a}) of the homogeneous state evolves toward a stationary square patten (Fig.\ref{fig:Turing_d}). 

\begin{figure}[ht]
	\centering
	\subfloat[]
	{\label{fig:Turing_a}\includegraphics[width=.4\textwidth]{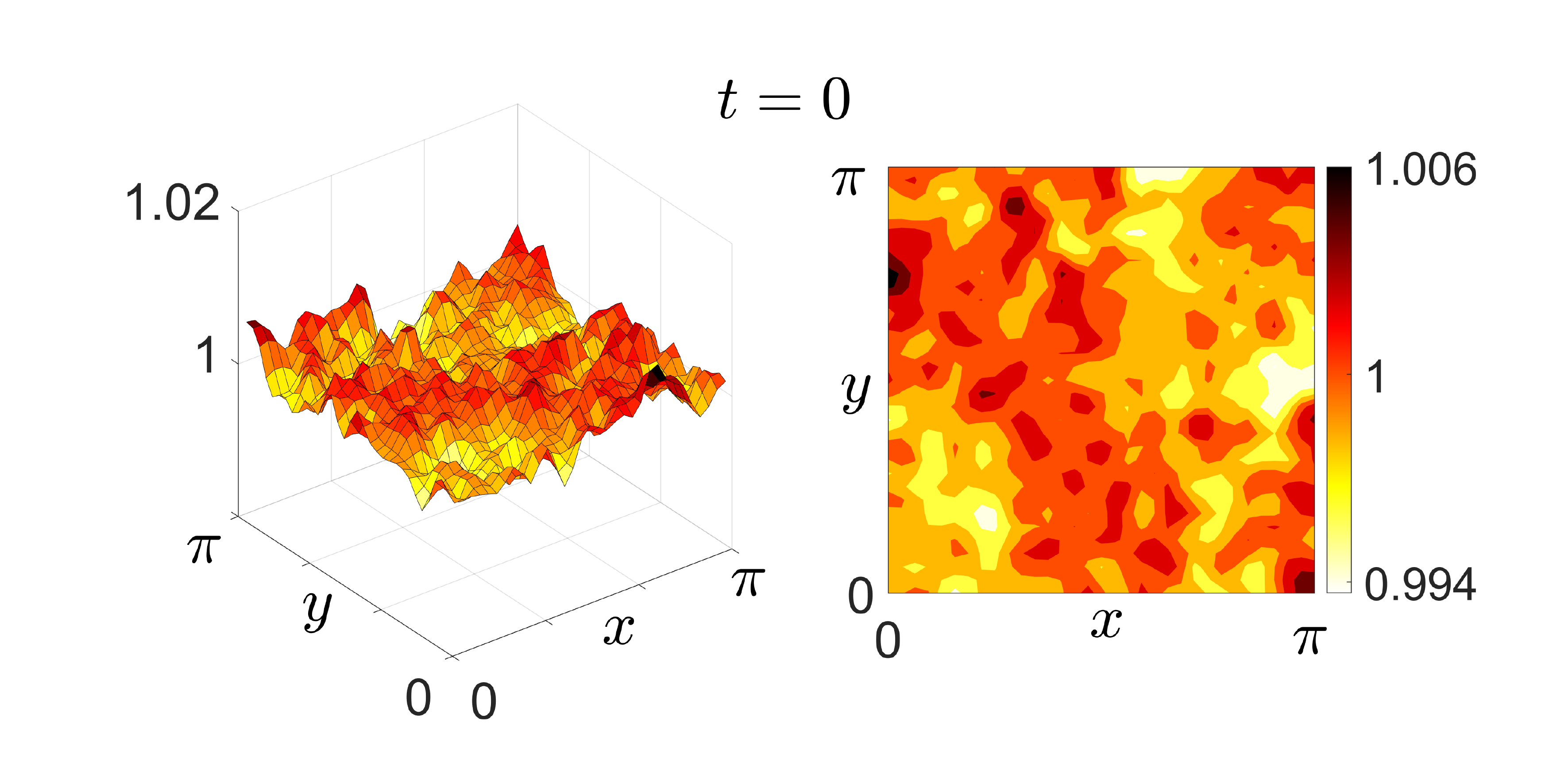}}
	\hspace{1mm}
	\subfloat[]
	{\label{fig:Turing_b}\includegraphics[width=.4\textwidth]{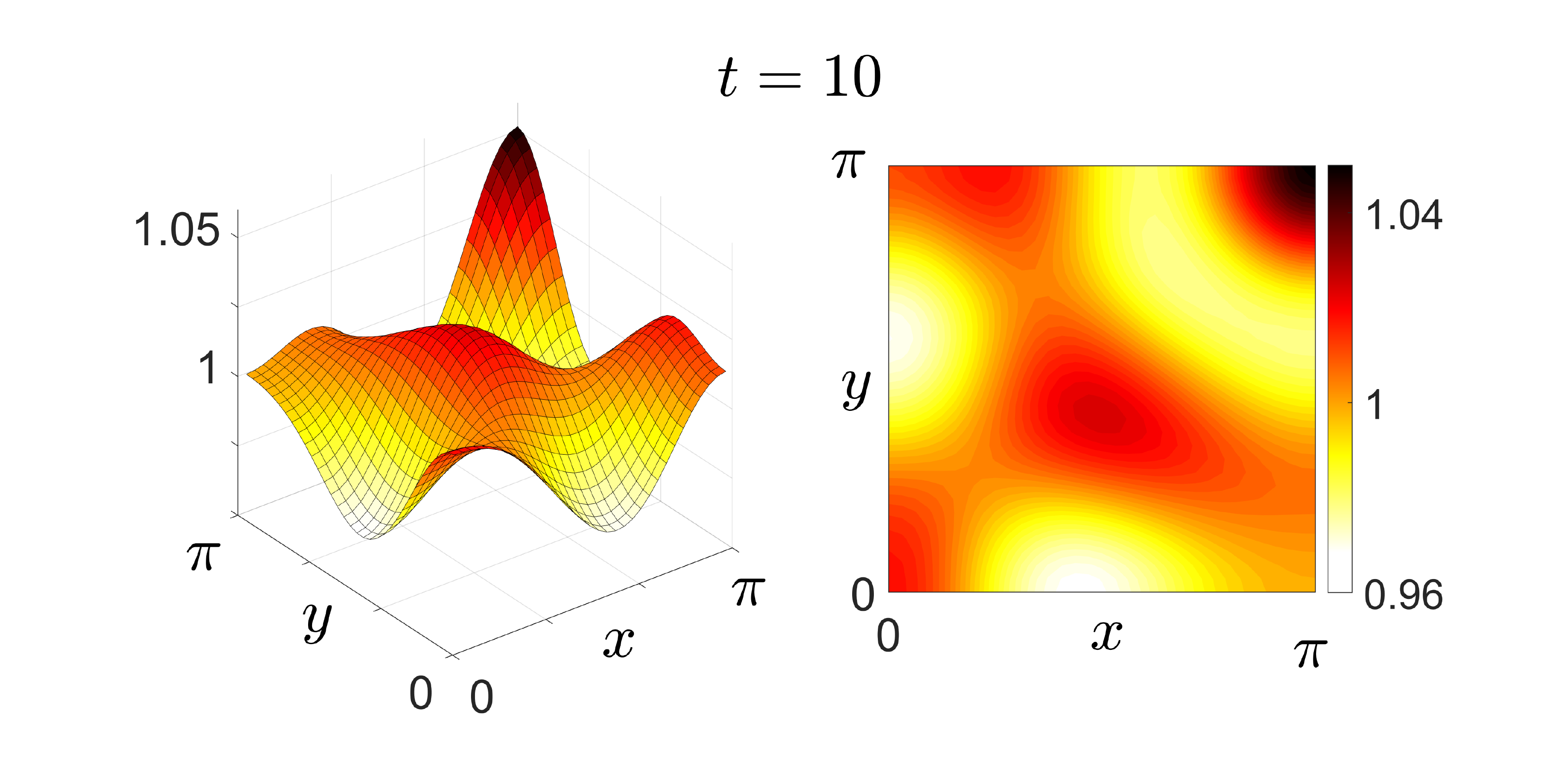}}\\
	\subfloat[]
	{\label{fig:Turing_c}\includegraphics[width=.4\textwidth]{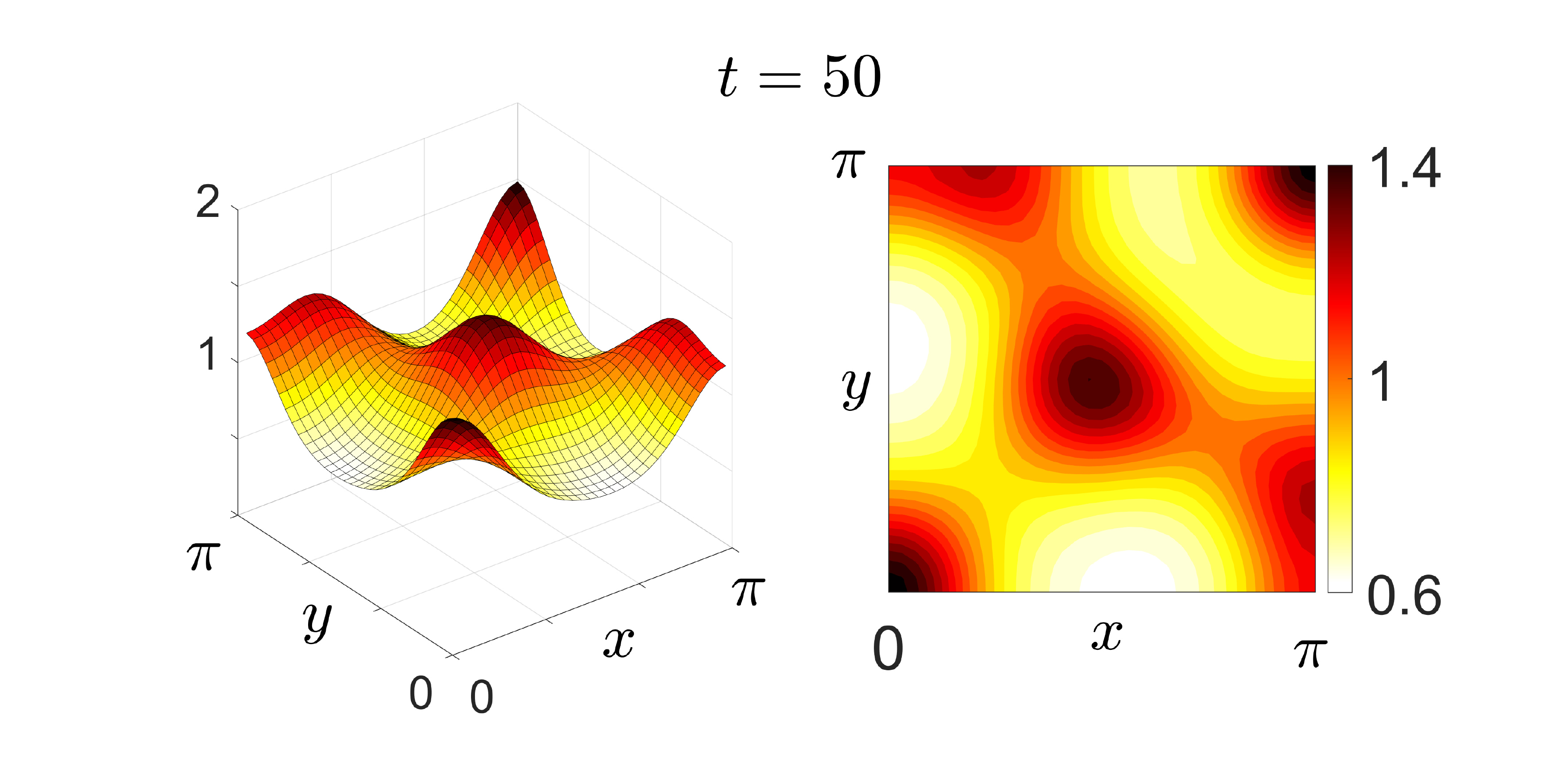}}
	\hspace{1mm}
	\subfloat[]
	{\label{fig:Turing_d}\includegraphics[width=.4\textwidth]{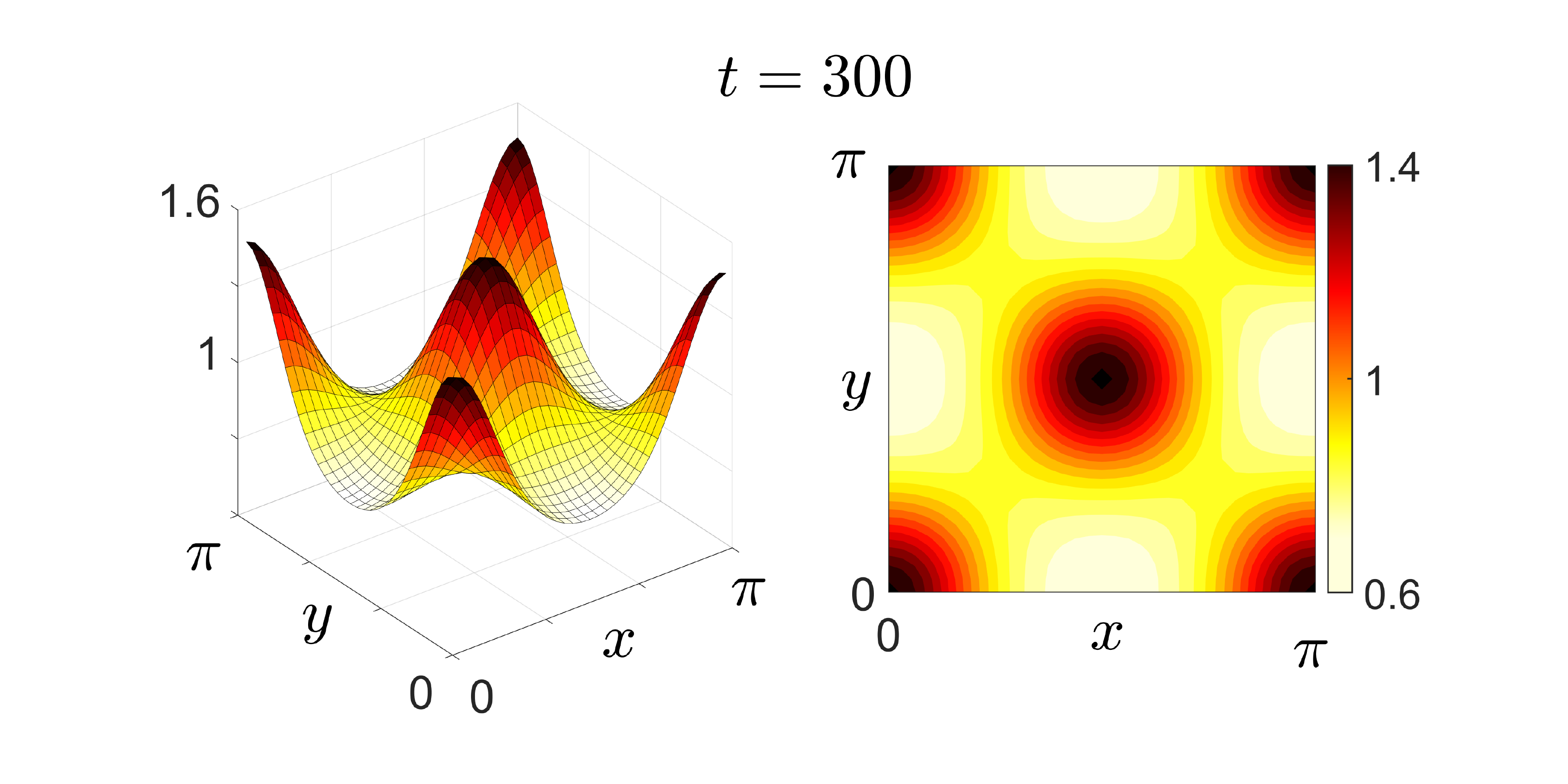}}
	\caption{Spatio-temporal evolution of $ m $ on the spatial domain $\Omega= [0;\pi]\times[0;\pi] $ above the Turing threshold $ \chi_c=3.125 $. The following parameter values have been fixed: $ r=1, \, a=3, \,\varepsilon_0=0.03125, \, \delta=1, \, \beta=1 $ and $ \chi=3.18. $ The initial condition is a small random perturbation of the homogeneous equilibrium $\bar m = 1$.}
	\label{fig:Turing_instability}
\end{figure}
On the other hand, when $ \chi<\chi_c $, the homogeneous steady state is stable and therefore perturbations of this equilibrium, as shown in Fig.\ref{fig:Turing_stable}, are dampened.
\begin{figure}[ht]
\centering
\includegraphics[width=.8\textwidth]{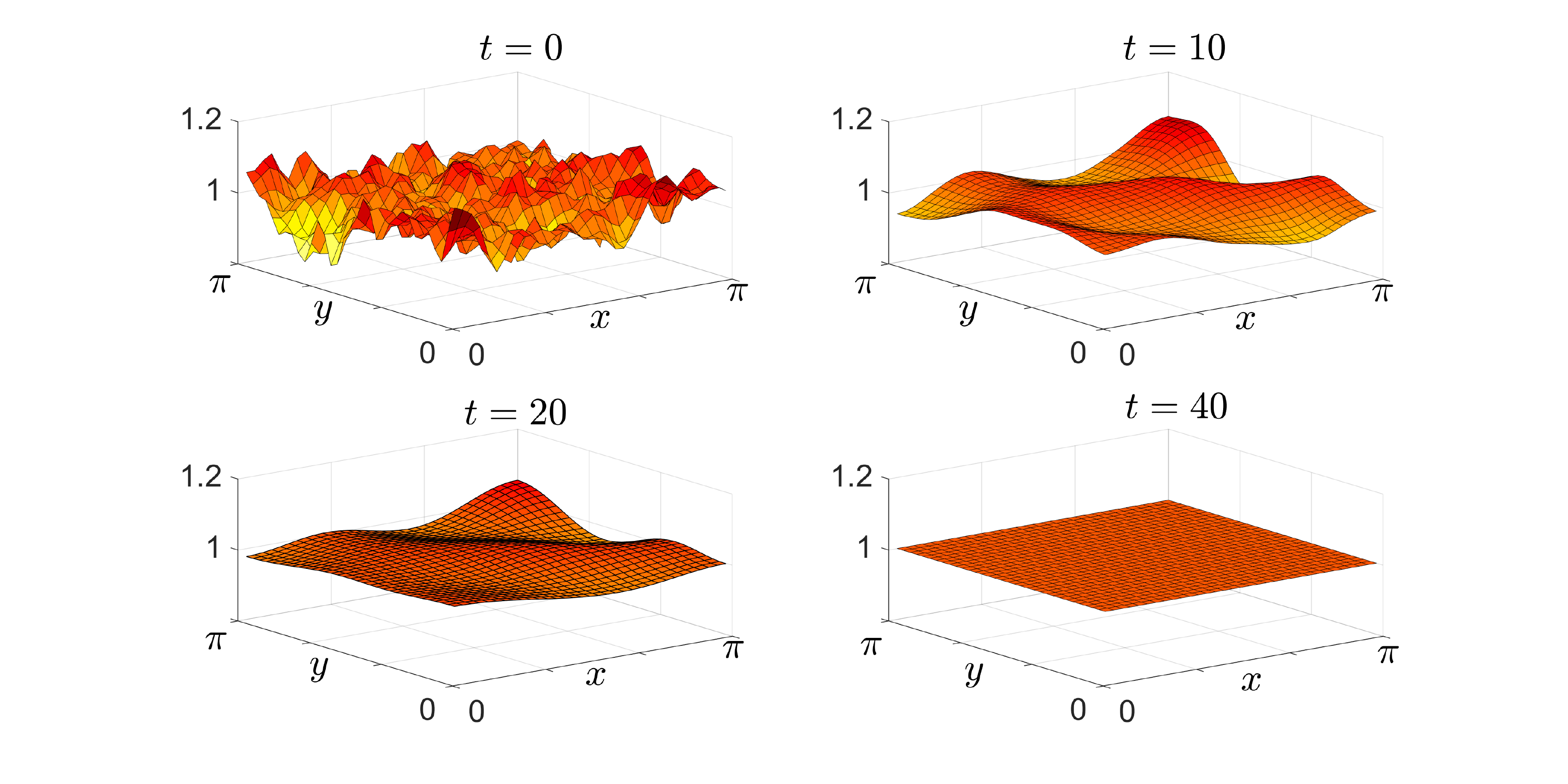}
\caption{Spatio-temporal evolution of $ m $ on the spatial domain $\Omega= [0;\pi]\times[0;\pi] $ below the Turing threshold $ \chi_c $. All parameter values, except $\chi$, are as in Fig.\ref{fig:Turing_instability}, and
 $ \chi=3 $. The initial condition is a random perturbation of the homogeneous equilibrium $\bar m = 1$.}
\label{fig:Turing_stable}
\end{figure}
\subsection{Global asymptotic stability}\label{subsec:GlobalStab}
We expect that the convergence to the steady state $(\bar m, \bar c, \bar d) = (1,\beta + \delta, 1)$ is not only local, as proved in Subsection \ref{local_stabil} when $\chi < \chi_{\mathrm{subcrit}}$, but is in fact global and valid for $\chi < \chi_c$. This is left as an interesting open question. We performed numerical simulations which support this conjecture. We show one of these simulations in Fig.\ref{fig:Global_asympt_stability}: below the Turing threshold $ \chi_c $, an initial datum far from the homogeneous equilibrium evolves toward the stable equilibrium point $ (\bar m, \bar c, \bar d) $.
\\Moreover we performed several other simulations adopting initial data further away from equilibrium and in any of these numerical experiments we observed solutions converging to the stable steady state.
\par 
Note that the large time behavior of system (\ref{sys_original}) when $\chi > \chi_c$ is a much more complicated problem: it involves the research
of inhomogeneous stable steady states, and the study of the basin of attraction.
\begin{figure}[ht]
	\centering
	\includegraphics[width=.8\textwidth]{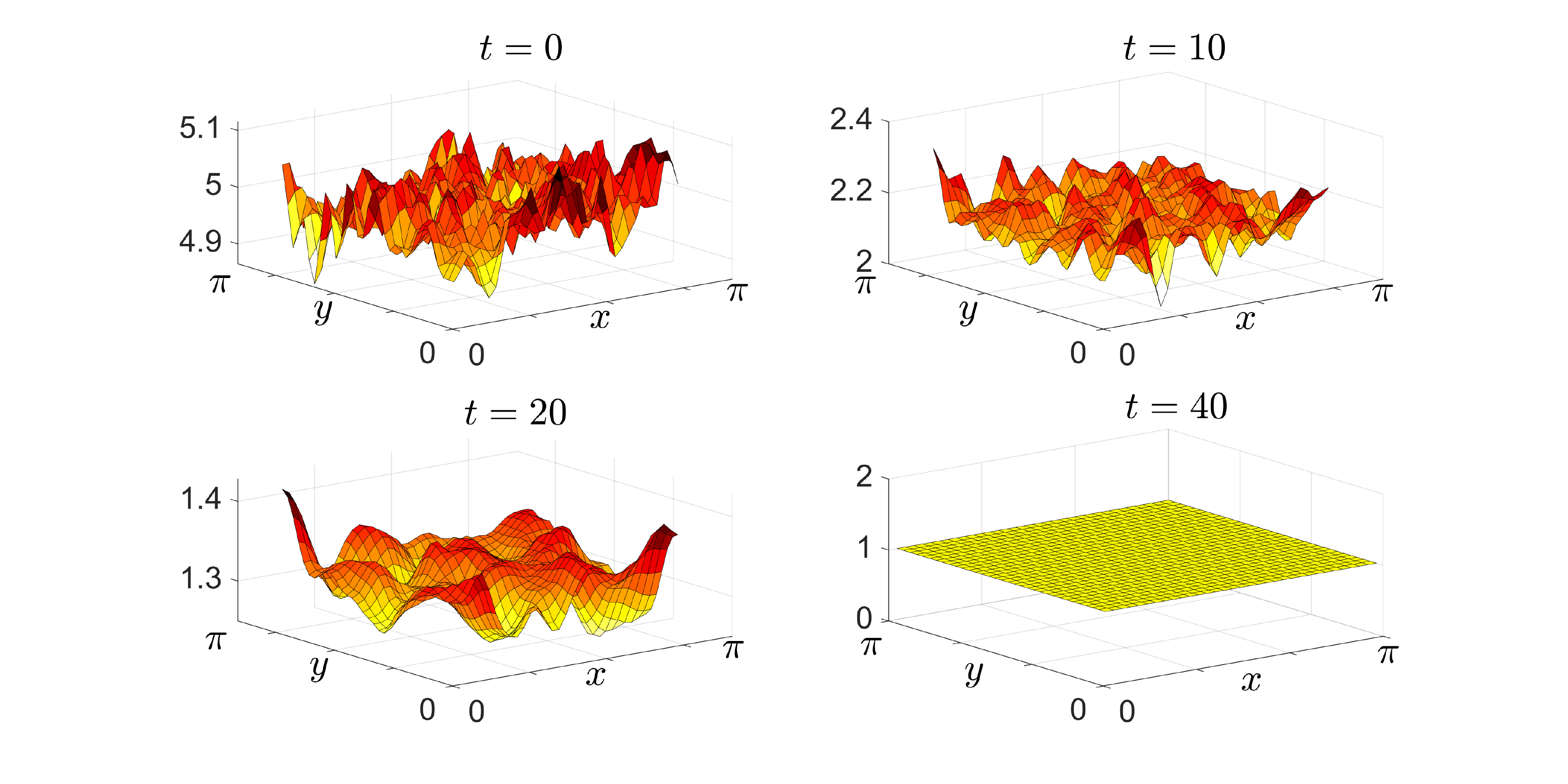}
	\caption{Spatio-temporal evolution of $ m $ on the spatial domain $\Omega= [0;\pi]\times[0;\pi] $ below the Turing threshold $ \chi_c $. All parameter values are as in Fig.\ref{fig:Turing_stable}. The initial condition is far from the homogeneous equilibrium $(\bar m, \bar c, \bar d)$.}
	\label{fig:Global_asympt_stability}
\end{figure}

\medskip
\noindent{\bf Acknowledgement.} This work is partially supported by NAWI Graz, and the International Research Training Group IGDK 1754 ``Optimization and Numerical Analysis for Partial Differential Equations with Nonsmooth Structures'', funded by the German Research Council (DFG) project number 188264188/GRK1754 and the Austrian Science Fund (FWF) under grant number W 1244-N18.

\end{document}